\DeclareMathOperator{\Hom}{Hom}
\DeclareMathOperator{\Tor}{Tor}
\DeclareMathOperator{\rank}{rank}
\def\HH{\operatorname{HH}}
\title[Koszul Artin Schelter-Regular Algebras of Dimension four]{Some components of the moduli space of Koszul Artin-Schelter regular algebras of dimension four}
\author{Vishal Bhatoy}
\address{Department of Mathematics, Carleton University, Ottawa, Ontario, Canada}
\email{vishalbhatoy@cmail.carleton.ca}
\author{Colin Ingalls}
\address{Department of Mathematics, Carleton University, Ottawa, Ontario, Canada}
\email{coliningalls@cunet.carleton.ca}
\author{F\'elix LaRoche}
\address{Department of Mathematics, Carleton University, Ottawa, Ontario, Canada}
\email{FelixLaroche@cmail.carleton.ca}
\author{Ravali Nookala}
\address{Department of Mathematics, Carleton University, Ottawa, Ontario, Canada}
\email{ravalinookala@cmail.carleton.ca}
\begin{document}
\begin{abstract}

    We compute the Hochschild cohomology and the Kodaira spencer map for known families of Koszul Artin-Schelter regular algebras of dimension four.  We show that when the Kodaira Spencer map at a point is a surjection, the image of the family is a component of the moduli stack of such algebras, and when the Kodaira Spencer map is a bijection, the map to the moduli stack is generically finite. We use this to identify some components of the moduli stack.
\end{abstract}
\maketitle
\newtheorem{theorem}{Theorem}[section]  
\newtheorem{lemma}[theorem]{Lemma}       
\newtheorem{proposition}[theorem]{Proposition}  
\newtheorem{corollary}[theorem]{Corollary}      
\theoremstyle{definition}
\newtheorem{definition}[theorem]{Definition}    
\newtheorem{remark}[theorem]{Remark}            
\newtheorem{example}[theorem]{Example}           
\newtheorem{question}[theorem]{Question}         
\newtheorem{custom_def}[theorem]{} 
\newtheorem*{mainthm}{Theorem}
\newcommand{\BigOplus}{\mathop{\scalebox{1.3}{$\bigoplus$}}}

\section{Introduction}

For some known families of Artin-Schelter regular algebras, it is possible to show that infinitesimal deformations of generic members of the families stay in the given family.  We show this as an example in ~\ref{toricfamily}.  This example can be formalised as a computation of the Kodaira-Spencer map from the tangent space of the base of the family at a point to the Hochschild Cohomology of the algebra.  We compute this for known families of Koszul Artin-Schelter algebras of dimension four.  This allows us to conclude which known families of Koszul Artin-Schelter regular algebras of dimension four form components of the moduli space.  
In~\cite{ARTIN1987171}, Artin and Schelter defined a class of noncommutative analogues of commutative polynomial algebras. They classified all such algebras of dimension three and less.  These Artin-Schelter (AS) regular algebras have been studied extensively \cite{rogalski2023artinschelter} and their classification has been approached from different perspectives \cite{Okawa,Artin2007SomeAA, MR1128218,BondalPolischuk}.  AS-regular algebras of higher dimensions have not been classified.  
There are many known families and some partial classifications.  Notably, AS-regular algebras of dimension three that are double Ore extensions are classified in \cite{MR2529094}. This paper constructs 26 families of AS-regular algebras. AS-regular algebras of dimension four fall into three types based on the ranks of the modules in the projective resolution of the residue field, $\textit{14641}$, $\textit{12221}$, and $\textit{13431}.$ The class $\textit{12221}$ has largely been classified by \cite{Lu_2007}.  The other known examples in the literature fall into the class $\textit{14641}$ of Koszul Artin-Schelter (KAS) regular algebras.

Given a flat family of Algebras $A_S$ with parameter space $S$, and a point $p\in S$, we use GAP code to compute the degree zero component of the second Hochschild Cohomology $\HH^2(A_p)_0$.  We further compute the infinitesimal deformations of $A_p$ given by infinitesimal perturbations of the point $p \in S$.  This gives the Kodaira-Spencer map
$$T_pS \to \HH^2_0(A_p).$$

We describe the moduli space of Koszul AS-regular algebras of dimension four via the Koszul dual.  In particular, we define $\mathcal{A}_4$ to be the moduli stack of Frobenius graded algebras with Hilbert Series $(1+t)^4$.  When these algebras are Koszul, they are Koszul duals of KAS regular algebras of dimension four.  However, the Koszul condition imposes infinitely many algebraic conditions, so we do not use this restriction.

When the Kodaira-Spencer map is a surjection, we can conclude that the closure of the image of the induced map $f : S \to \mathcal{A}_4$ is a component of the moduli stack.  

{In Section~2,} we review the necessary background and definitions, including Artin-Schelter regular algebras, their minimal resolutions, and the classification of four-dimensional resolution types. We also discuss Hochschild cohomology and its bigraded refinement, Koszul algebras and duality, and the framework of moduli spaces and stacks for studying families of algebras.

{In Section~3,} we study first-order deformations of flat families of graded algebras over a base scheme $S$ via Hochschild cohomology. We construct the \emph{Kodaira--Spencer map}, which assigns to each tangent vector $v \in T_p S$ a class in the degree-zero part of $\HH^2_0(A_p)$, where $A_p$ is the fibre at $p$. This map encodes how infinitesimal base variations induce first-order algebra deformations of the fibre.

We study an accessible example,
the skew polynomial algebras $A = \mathbb{K}_Q[x_1, x_2, x_3, x_4]$ determined by a generic multiplicatively skew-symmetric matrix $Q$. We prove that the Kodaira--Spencer map is an isomorphism. Consequently, each tangent vector corresponds uniquely to a graded first-order deformation, and the family forms a smooth component of the moduli space of four-dimensional AS-regular algebras. This establishes a concrete link between Hochschild cohomology and the local geometry of the moduli space.

 {In Section~4,} we present the computations of  the Hochschild cohomology $\HH_0^i$ and the Kodaira--Spencer map for known families of four-dimensional Koszul Artin--Schelter regular algebras of type $\textit{(14641)}$. For each family, we evaluate these maps at specific parameter values, recording dimensions, injectivity, and surjectivity.

{In Section~5,} we establish criteria ensuring that a morphism from an irreducible scheme or family of algebras to an algebraic stack $\mathcal{Y}$ maps densely onto an irreducible component. We begin by showing that if the differential at a point is surjective, the image closure of a morphism between schemes is an irreducible component in Propositions~\ref{prop:irreducible_component} and~\ref{prop:image_irreducible}. We extend these results to algebraic stacks by proving that surjectivity of the tangent map implies that the family maps onto an irreducible component in Theorem~\ref{thm:irreducible_component}. Applying this to the universal family of four-dimensional Koszul Artin--Schelter regular algebras, we identify the Zariski tangent space at a point with $\HH^2_0(A_p)$ and deduce that surjectivity of the Kodaira--Spencer map guarantees that the image of each computed family is dense in an irreducible component of the moduli stack $\mathcal{A}_4$ in Corollary~\ref{corr510} and Theorem~\ref{mainthm511}.

These computations allow us to identify which families map densely onto irreducible components of the moduli stack $\mathcal{A}_4$. 

We thank Michaela Vancliff and Ellen Kirkman for providing some algebra families used in our computations.

\section{Background and Definitions}
Let $\mathbb{K}$ be an algebraically closed field of characteristic zero.
We will say $A$ is connected graded if $A$ is graded $A = \oplus_{i \geq 0} A_i$ with $A_0=\mathbb{K}.$

\begin{definition}
\textbf{Artin-Schelter regular algebra} \cite{ARTIN1987171}: An Artin-Schelter regular algebra (AS-regular algebra) is a connected graded algebra $A$ over a field $\mathbb{K}$ generated in degree one, such that the following conditions hold:
\begin{enumerate}
    \item $A$ has finite global dimension $d$.
    \item $A$ is Gorenstein in the sense that there exists $d$ such that $\text{Ext}^d(\mathbb{K}, A) = \mathbb{K}$ and $\text{Ext}^j(\mathbb{K}, A) = 0$ for $j \neq d$.
    \item \textnormal{\( A \) has polynomial growth, i.e., \( f(n) = \dim_\mathbb{K} A_n \) is bounded above by a polynomial function of \( n \).}
\end{enumerate}
\end{definition}
By the dimension of an AS-regular algebra, we will mean the integer $d$ appearing above. If $A$ is AS-regular, then the trivial left $A$-module $A_\mathbb{K}$ has a minimal free resolution of the form
\[ 0 \rightarrow P_d \rightarrow \dots \rightarrow P_1 \rightarrow P_0 \rightarrow \mathbb{K}_A \rightarrow 0 \]
where $P_w = \bigoplus_{s=1}^{n_w} A(-i_{w,s})$ for some positive integers $n_w$ and $i_{w,s}$. The Gorenstein condition (2) implies that the above free resolution is pallindromic in the sense that the dual complex of the above sequence is a free resolution of the trivial right $A$-module (after a degree shift). As a consequence, we have $P_0 = A$, $P_d = A(-l)$, $n_w = n_{d-w}$ and $i_{w,s} + i_{d-w,n_w-s+1} = l$ for all $w, s$.




Under the natural hypothesis that $A$ is a Noetherian domain and $
\dim A = 4$, there are three possible resolution types \cite[Prop.~1.4]{Lu_2007}:
\begin{equation}\label{eq:type3}
    0 \rightarrow A(-4) \rightarrow A(-3)^4 \rightarrow A(-2)^6 \rightarrow A(-1)^4 \rightarrow A \rightarrow \mathbb{K}_A \rightarrow 0
\end{equation}
\begin{equation}
    0 \rightarrow A(-5) \rightarrow A(-4)^3 \rightarrow A(-3)^2 \oplus A(-2)^2 \rightarrow A(-1)^3 \rightarrow A \rightarrow \mathbb{K}_A\rightarrow 0
\end{equation}
\begin{equation} 
    0 \rightarrow A(-7) \rightarrow A(-6)^2 \rightarrow A(-4) \oplus A(-3) \rightarrow A(-1)^2 \rightarrow A \rightarrow \mathbb{K}_A\rightarrow 0
\end{equation}
with Hilbert series $h_A(t)$, respectively:
$$ \frac{1}{(1-t)^4}, \quad \frac{1}{(1-t)^3(1-t^2)}, \quad \frac{1}{(1-t)^2(1-t^2)(1-t^3)}.$$

Suggested by the form of the resolution as in equation (\ref{eq:type3}), we say such an algebra is of type \textit{(14641)}. In this paper we mainly deal with algebras of type \textit{(14641)}. An algebra of type \textit{(14641)} is Koszul \cite[Theorem 0.1]{MR2529094}.
So we will restrict attention to the situation where $A$ is generated in degree 1, and has defining relations in degree 2. 

\subsection{Hochschild Cohomology}

\begin{definition}
\textnormal{\textbf{Hochschild Cohomology:}
For a field  $\mathbb{K}$, Hochschild cohomology associates a sequence of $\mathbb{K}$-vector spaces \( \HH^n(A) \) to a $\mathbb{K}$-algebra \( A \). In Hochschild’s original paper \cite[Section 2]{Hochschild1945OnTC}, the \textbf{Hochschild chain complex} of \( A \) with coefficients in \( A \) are defined as
\[
C^n(A) = \text{Hom}_\mathbb{K}(A^{\otimes n}, A),
\]
where \( A^{\otimes n} \) is the tensor product of \( A \) with itself \( n \) times and \( A^{\otimes 0} = \mathbb{K} \). They are equipped with the differential
\[
d : C^n(A) \to C^{n+1}(A)
\]
defined by the following formula, for \( f \in C^n(A) \):
\begin{align*}
d f(a_1 \otimes \cdots \otimes a_{n+1}) &= a_1 f(a_2 \otimes \cdots \otimes a_{n+1}) \\
&+ \sum_{i=1}^n (-1)^i f(a_1 \otimes \cdots \otimes a_i a_{i+1} \otimes \cdots \otimes a_{n+1}) \\
&+ (-1)^{n+1} f(a_1 \otimes \cdots \otimes a_n) a_{n+1}.
\end{align*}
For the \( n = 0 \) case, we have
\[
d f(a_1) = a_1 f(1) - f(1) a_1.
\]
Thus, we can define the \( n \)-th \textbf{Hochschild cohomology} of \( A \) (with coefficients in \( A \)) as
\[
\mathrm{HH}^n(A) = \frac{\ker(d : C^n(A) \to C^{n+1}(A))}{\text{im}(d : C^{n-1}(A) \to C^n(A))}.
\]
Note that for \( n \leq -1 \), \( \mathrm{HH}^n(A) = 0 \).
}
\end{definition}

\begin{definition}
\textnormal{\textbf{Bigraded Hochschild cohomology:}
Let \( A = \bigoplus_{i \ge 0} A_i \) be a graded algebra. A standard procedure, described for example in \cite[Section 5.4]{Witherspoon2019HochschildCF}, incorporates the grading of \( A \) into its Hochschild cohomology by defining the \textbf{bigraded Hochschild complex}
\[
C_{r}^n(A) = \mathrm{Hom}_\mathbb{K}(A^{\otimes n}, A)_r \subseteq C^n(A),
\]
where \( \mathrm{Hom}_\mathbb{K}(A^{\otimes n}, A)_r \) consists of all maps \( f \in \mathrm{Hom}_\mathbb{K}(A^{\otimes n}, A) \) satisfying
\[
\deg f(a_1 \otimes \cdots \otimes a_n) = \sum_{j=1}^n \deg(a_j) + r.
\]
We can verify that the Hochschild differential \( d \) preserves this grading, i.e.,
\[
d(C_{r}^n(A)) \subseteq C_{r}^{n+1}(A).
\]
Thus, for each \( r \in \mathbb{Z} \), we define the \textbf{bigraded Hochschild cohomology} of internal degree \( r \) and cohomological degree \( n \) by
\[
\mathrm{HH}^{n}_r(A) = \frac{\ker(d : C_{r}^n(A) \to C_{r}^{n+1}(A))}{\mathrm{im}(d : C_{r}^{n-1}(A) \to C_{r}^n(A))}.
\]
The Hochschild cohomology inherits a natural internal grading:
\[
\mathrm{HH}^n(A) = \bigoplus_{r \in \mathbb{Z}} \mathrm{HH}_r^n(A).
\]}
\end{definition}



\subsection{Koszul Algebras}  
Let \( A = T(V)/\langle R \rangle \) be a connected \(\mathbb{N}\)-graded quadratic algebra over a field \( \mathbb{K} \), where \( R \subseteq V \otimes V \). Its \textbf{quadratic dual} is
\[
A^! = T(V^*) / \langle R^\perp \rangle,
\]
where \( R^\perp \subseteq V^* \otimes V^* \) is the orthogonal complement of \( R \).

Let \( h_A(t) \) and \( h_{A^!}(t) \) be the Hilbert series of \( A \) and \( A^! \), respectively.
We define  \( A \) to be \textbf{Koszul}, if the minimal graded projective resolution of the trivial \( A \)-module \( \mathbb{K} \) is linear.   Then \textbf{Koszul duality} identity holds \cite{Positselskii1995} (see also \cite[Thm.~5.9]{MR1388568}):
\[
h_A(t)\, h_{A^!}(-t) = 1.
\]

Let \(\{x_j\}\) be a basis of \(V\) with dual basis \(\{x^j\}\subset V^*\), and define the canonical tensor
\[
m = \sum_j x^j \otimes x_j \;\in V^*\otimes V.
\]
The minimal resolution of $\mathbb{K}$ is given by 
$$Q_i:=A_i^{!*}\otimes A(-i)$$ 
where \(A^{!*}_i\) is the \( \mathbb{K} \)-dual of the \( i \)-th graded component of \( A^! \).  
The differential given by multiplication by $m$.
In \cite[Prop.~3.1]{vandenBergh1994}, this element is used to construct  a resolution of \( A \) as an \( A^e \)-module (with \( A^e = A \otimes_\mathbb{K} A^{\mathrm{op}} \)) which has components
\[
P_i \cong A \otimes (A^!)_i^* \otimes A,
\]

Hence there is a chain complex $K^*$ with components
\[
K^i := \Hom_{A^e}(P_i, A) \cong \Hom_{\mathbb{K}}((A^!)_i^*, A) \cong (A^!)_i \otimes A,
\]
with differential 
\[
\delta_i(\alpha \otimes a) = \sum_j (x^j \alpha) \otimes (x_j a) - (-1)^i (\alpha x^j) \otimes (a x_j),
\qquad \alpha\in (A^!)_i,\; a\in A.
\]
which computes the Hoschschild cohomology.
This complex is bigraded by homological degree \( i \) and internal degree \( r \), inducing a decomposition
\[
\mathrm{HH}^i(A) = \bigoplus_r \mathrm{HH}^i_r(A).
\]
The \textbf{internal degree zero strand} is
\[
K^i_0 \cong (A^!)_i \otimes A_i,
\]
and the Hochschild cohomology groups in internal degree zero are computed as the cohomology of
\[
\cdots \to (A^!)_{i-1}\otimes A_{i-1} \xrightarrow{\;\delta_{i-1}\;} (A^!)_i\otimes A_i \xrightarrow{\;\delta_i\;} (A^!)_{i+1}\otimes A_{i+1}\to \cdots.
\]

\begin{example} \label{example:hochschild}
\setcounter{equation}{0}
Let 
$
A = \mathbb{K}[x_1, x_2, \ldots, x_d]
$
be the standard commutative polynomial ring over a field \( \mathbb{K} \). Then:

\begin{itemize}[leftmargin=*]
    \item \( A \) is a commutative, connected, graded, Koszul algebra generated in degree 1.  
    \item Its global dimension is \( d \).  
    \item The defining quadratic relations are commutativity:
    \[
    x_i x_j - x_j x_i = 0 \quad \text{for all } 1 \leq i < j \leq d.
    \]
    \item The quadratic dual of \( A \) is the exterior algebra on the dual basis:
    \[
    A^! = \Lambda(x^1, x^2, \ldots, x^d),
    \]
    with \( \deg x^i = 1 \) and multiplication given by anti-commutativity:
    \[
    x^i x^j = - x^j x^i, \quad (x^i)^2 = 0.
    \]
    \item As a graded vector space, we have
    \[
    \dim_{\mathbb{K}} (A^!)_n = \binom{d}{n}.
    \]
\end{itemize}

The Koszul resolution of \( A \) as an \( A^e \)-module is
\[
P_n = A \otimes (A^!)_n^* \otimes A.
\]
Applying \( \Hom_{A^e}(-, A) \) yields the Hochschild cochain complex:
\begin{equation} \label{eq:cochain complex}
K^n = \Hom_{A^e}(P_n, A) \cong \Hom_{\mathbb{K}}((A^!)_n, A).
\end{equation}

The internal degree-\( r \) component is
\[
K^n_r = \Hom_{\mathbb{K}}((A^!)_n, A_r),
\quad \text{so that} \quad
\dim_{\mathbb{K}} C^n_r = \binom{d}{n} \cdot \binom{r + d - 1}{r}.
\]

In particular, for \( r = 0 \), we have
\[
K^n_0 = \Hom_{\mathbb{K}}((A^!)_n, \mathbb{K}) \cong (A^!)_n^*,
\]
and since \( A \) is a polynomial algebra, all differentials vanish in internal degree zero. Therefore,
\[
\mathrm{HH}^n_0(A) \cong (A^!)_n,
\]
so the internal degree-zero part of Hochschild cohomology has dimension
\[
\dim_{\mathbb{K}} \mathrm{HH}^n_0(A) = \binom{d}{n}.
\]
\end{example}

For general Koszul AS-regular algebras, the dimensions of the Hochschild cohomology vary, but we have the following facts:

\begin{proposition}
Let \( A \) be a Koszul, connected, weakly Artin–Schelter regular algebra of global dimension \( d \). Then
    $$\HH^i_r(A) = 0 \mbox{ for } i < 0 \mbox{ or }i>d$$
\[
\sum_{i=0}^d (-1)^i \dim_{\mathbb{K}} \HH^i_r(A) = 
\begin{cases}
(-1)^d & \text{if } r + d = 0, \\
0 & \text{if } r + d \neq 0.
\end{cases}
\]
where \( \HH^i_r(A) \) denotes the internal degree $r$ part of \( \HH^i(A) \).
\end{proposition}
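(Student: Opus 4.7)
The plan centers on the Koszul complex $K^\bullet$ introduced in the paragraph above the statement, which computes $\HH^\bullet(A)$ and decomposes under the internal grading as $K^i_r \cong (A^!)_i \otimes A_{i+r}$. For the vanishing assertions, I would observe that the weakly AS-regular hypothesis (conditions (1) and (2) of the definition, not requiring polynomial growth) already gives a palindromic minimal free resolution of $\mathbb{K}_A$ of length $d$; since $A$ is Koszul, this resolution is linear with $P_i = A(-i)^{n_i}$ where $n_i = \dim_\mathbb{K} (A^!)_i$, so $(A^!)_i = 0$ for $i > d$. Hence the Koszul complex is concentrated in cohomological degrees $0 \le i \le d$, which immediately gives $\HH^i_r(A) = 0$ for $i < 0$ or $i > d$.

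For the alternating sum, fix $r \in \mathbb{Z}$ and note that the strand $K^\bullet_r$ is a bounded complex of finite-dimensional $\mathbb{K}$-vector spaces: each $(A^!)_i \otimes A_{i+r}$ is finite-dimensional, and nonzero only for finitely many $i$. The Euler--Poincar\'e principle then yields
\[
\sum_{i=0}^d (-1)^i \dim_\mathbb{K} \HH^i_r(A) \;=\; \sum_{i=0}^d (-1)^i \dim(A^!)_i \cdot \dim A_{i+r}.
\]
Assembling these integers into a generating function in a formal variable $t$ and reindexing by $j = i+r$ gives
\[
\sum_{r \in \mathbb{Z}} \Bigl(\sum_{i} (-1)^i \dim(A^!)_i \dim A_{i+r}\Bigr) t^r \;=\; h_{A^!}(-1/t)\cdot h_A(t).
\]

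To conclude, I would use two structural inputs. First, the Gorenstein/palindromic condition on the resolution forces $A^!$ to be a Frobenius algebra with socle in degree $d$, so $h_{A^!}$ is a palindromic polynomial of degree $d$ satisfying $h_{A^!}(t) = t^d\, h_{A^!}(1/t)$; substituting $-t$ gives $h_{A^!}(-1/t) = (-1)^d t^{-d}\, h_{A^!}(-t)$. Second, Koszul duality provides $h_A(t)\, h_{A^!}(-t) = 1$. Combining these collapses the generating function to $(-1)^d t^{-d}$, and reading off the coefficient of $t^r$ yields the claimed dichotomy. The only (mild) subtlety is confirming that the palindromic symmetry of $h_{A^!}$ follows from the weakly AS-regular hypothesis alone, since polynomial growth plays no role here; once this is in hand, the remainder is a short manipulation of formal Laurent series.
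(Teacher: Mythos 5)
Your proof is correct and follows essentially the same route as the paper's: both rest on the Euler--Poincar\'e identity for the degree-$r$ strand of the Koszul complex $K^i_r \cong (A^!)_i \otimes A_{i+r}$, the Frobenius/palindromic symmetry $\dim_{\mathbb{K}}(A^!)_i = \dim_{\mathbb{K}}(A^!)_{d-i}$, and the exactness of the Koszul resolution of $\mathbb{K}$ (equivalently, the Hilbert-series identity $h_A(t)\,h_{A^!}(-t)=1$). The only difference is presentational: you package the index shift $i \mapsto d-i$ as a generating-function manipulation, whereas the paper carries it out termwise.
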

\begin{proof}
\setcounter{equation}{0} 

Since \( A \) is Koszul and weakly Artin-Schelter regular with global dimension \( d \), its Koszul dual \( A^! \) is finite-dimensional and Gorenstein.

The minimal graded projective resolution of the trivial module \(\mathbb{K}\) is the Koszul complex
\[
Q_i = A^{!*}_i \otimes A(-i).
\]
Taking the $r^{th}$ strand of this complex gives
$$\delta_0^r = \sum_{i=0}^d(-1)^i \dim A_i^{!} \dim A_{-i+r}.$$

Considering internal degree \( r \), the degree \( r \) component is \( K^i_r = A^!_i \otimes A_{r+i} \), so the Euler characteristic in degree \( r \) is
\begin{equation}\label{eq:2}
\sum_{i=0}^d (-1)^i \dim_{\mathbb K} \HH^i_r(A) = \sum_{i=0}^d (-1)^i \dim_\mathbb K (A^!_i \otimes A_{r+i})\\
= \sum_{i=0}^d (-1)^i \dim_\mathbb K (A^!_i)\dim_\mathbb{K} (A_{r+i}).
\end{equation}

By the weak AS-Gorenstein property, \(A^!\) admits a non-degenerate graded pairing
\[
A^!_i \cong A^{!*}_{d-i}.
\]

We now compute \begin{eqnarray*}
    \delta_0^r & = &  \sum_{i=0}^d(-1)^i \dim A_i^{!} \dim A_{-i+r} \\
& = & \sum_{j=0}^d(-1)^{d-j} \dim A_{d-j}^{!} \dim A_{r-d+j} \\
& = & (-1)^d\sum_{j=0}^d(-1)^j \dim A_{j}^{!} \dim A_{j+(r-d)} \\
& = & (-1)^d\sum_{j=0}^d(-1)^j \dim \HH^j_{r-d}(A) \\
\end{eqnarray*}



\end{proof}

\begin{corollary}
Let \( A \) be a Koszul, connected, weakly Artin–Schelter regular algebra of global dimension \( d \). Then
\[
\sum_{i=0}^d (-1)^i \dim_{\mathbb{K}} \HH^i_0(A) = 0,
\]
where \( \HH^i_0(A) \) denotes the internal degree zero part of \( \HH^i(A) \).
\end{corollary}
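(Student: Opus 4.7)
The plan is to deduce this corollary as an immediate specialization of the preceding proposition. Setting $r = 0$ in the dichotomy
\[
\sum_{i=0}^d (-1)^i \dim_{\mathbb{K}} \HH^i_r(A) =
\begin{cases}
(-1)^d & \text{if } r + d = 0, \\
0 & \text{if } r + d \neq 0,
\end{cases}
\]
reduces the claim to checking which case applies. Since $A$ has global dimension $d \geq 1$ (the corollary is vacuous or trivial for $d = 0$, and in this paper all algebras of interest have $d = 4$), the condition $r + d = 0$ fails, so we land in the second branch and the alternating sum is zero.

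Thus the only thing to verify explicitly is that $d \neq 0$, which is automatic for any connected graded algebra that is not the ground field. No further calculation with the Koszul complex, the pairing $A^!_i \cong A^{!*}_{d-i}$, or the Euler characteristic is required, since all of that machinery is already absorbed into the proposition. The whole proof is a single sentence invoking the proposition at $r=0$, so I do not anticipate any obstacle.
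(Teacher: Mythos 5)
Your proposal is correct and matches the paper exactly: the paper gives no separate proof of the corollary, treating it as the immediate specialization of the proposition at $r=0$, where $r+d=d\neq 0$ forces the second branch. (The only caveat is that for $d=0$ the statement would actually be \emph{false} rather than vacuous, since the sum would equal $(-1)^0=1$; but the standing hypotheses of the paper ensure $d\geq 1$, as you note.)
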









\subsection{Moduli Space}
We introduce the moduli space of AS-regular algebras of dimension four.  We will do this using the Koszul dual.  The moduli space will be the space of graded algebras with Hilbert series $(1+t)^4$ that are Frobenius.  In addition, we want these algebras to be Koszul and then their quadratic duals are AS-regular algebras, but there are infinitely many algebraic conditions for an algebra to be Koszul, so we do not impose this.

\begin{definition}
\textnormal{\cite[Tag 02ZI]{stacks-project} A \textbf{stack in groupoids} over a site $\mathcal{C}$ \cite[Tag 00VH]{stacks-project} is a category $p : \mathcal{S} \to \mathcal{C}$ over $\mathcal{C}$ such that:}
\begin{itemize}
    \item \textnormal{$p : \mathcal{S} \to \mathcal{C}$ is fibred in groupoids over $\mathcal{C}$,}
    \item \textnormal{for all $U \in \operatorname{Ob}(\mathcal{C})$, for all $x,y \in \operatorname{Ob}(\mathcal{S}_U)$, the presheaf $\operatorname{Isom}(x,y)$ is a sheaf on the site $\mathcal{C}/U$, and}
    \item \textnormal{for all coverings $\mathcal{U} = \{ U_i \to U \}$ in $\mathcal{C}$, all descent data $(x_i, \varphi_{ij})$ for $\mathcal{U}$ are effective.}
\end{itemize}
\end{definition}
Let \( \mathcal{X} \) be a stack over a base Spec $\mathbb{K}$. This means that for each open scheme $U$ in $\mathbb{K}$, the fibre \( \mathcal{X}_U \) over \( U \) is a groupoid. The objects in \( \mathcal{X}_U \) are the objects parameterized by \( U \), and the morphisms represent equivalence classes of maps between these objects.

\begin{definition}
\textnormal{\cite[Tag 026O]{stacks-project}:} Let $S$ be a base scheme contained in $\text{Sch}_{fppf}$. An \textbf{algebraic stack} over $S$ is a category 
\[
p: \mathcal{X} \to (\text{Sch}/S)_{fppf}
\]
over $(\text{Sch}/S)_{fppf}$ with the following properties:
\begin{itemize}
    \item The category $\mathcal{X}$ is a stack in groupoids over $(\text{Sch}/S)_{fppf}$.
    \item The diagonal $\Delta: \mathcal{X} \to \mathcal{X} \times \mathcal{X}$ is representable by algebraic spaces.
    \item There exists a scheme $U \in \operatorname{Ob}((\text{Sch}/S)_{\mathrm{fppf}})$ and a $1$-morphism 
    \[
    (\text{Sch}/U)_{\mathrm{fppf}} \to \mathcal{X}
    \]
    which is surjective and smooth.
\end{itemize}

\end{definition}

\begin{definition}
 A \textbf{Quotient stack} is a stack that encodes the quotient of a space by a group action while retaining information about stabilizers. Given a scheme or algebraic space \( X \) with an action of a group scheme \( G \), the quotient stack \( [X/G] \) is a category fibred in groupoids over the category of schemes.

Let $G$ be a group acting on a manifold $X$ (left action).
We define the quotient stack $[X/G]$ as
\[
[X/G](Y) := \left\{ (\pi, f) \,\middle|\,
\begin{aligned}
&\pi : P \to Y \text{ is a principal } G\text{-bundle,} \\
&f : P \to X \text{ is a } G\text{-equivariant morphism}
\end{aligned}
\right\}
\]
\textnormal{Morphisms of objects are $G$-equivariant isomorphisms.}
\end{definition}
Let \( A \) be a finite-dimensional \( \mathbb{K} \)-algebra of dimension \( d \), with a fixed basis \( \{x_1, \dots, x_d\} \). The multiplication in \( A \) is determined by structure constants \( c_{ij}^k \in \mathbb{K} \) via
\[
x_i x_j = \sum_{k=1}^d c_{ij}^k x_k.
\]
These constants define a point in the affine space \( \mathbb{A}^{d^3} \), parametrizing all such algebra structures.

The general linear group \( \mathrm{GL}(d) \) acts on \( \mathbb{A}^{d^3} \) by change of basis. For \( P \in \mathrm{GL}(d) \), the transformed structure constants \( c'_{ij}{}^k \) are given by
\[
c'_{ij}{}^k = \sum_{l,m,n=1}^d P_{il} P_{jm} (P_{nk})^{-1} c_{lm}^n.
\]

This action preserves isomorphism classes of algebras, i.e., two algebras are isomorphic if and only if their structure constants lie in the same \( \mathrm{GL}(d) \)-orbit.



We now describe closed conditions on the structure constants \( c_{ij}^k \) for certain algebraic properties.

\textbf{Case 1: Unital Algebras.}
Assume \( x_1 = 1 \) is the identity. Then for all \( i \),
\[
x_1 \cdot x_i = x_i \cdot x_1 = x_i.
\]
Using the multiplication rule \( x_i \cdot x_j = \sum_k c_{ij}^k x_k \), the unital condition imposes:
\[
c_{1i}^k = c_{i1}^k = \delta_{ik} \quad \text{for all } i,k.
\]
To preserve the identity under base change, restrict to the subgroup
\[
G(d,1) = \{ p \in GL(d) \mid p(x_1) = x_1 \} \subset GL(d).
\]

\textbf{Case 2: Associative Algebras.}
Associativity requires:
\[
(x_i \cdot x_j) \cdot x_k = x_i \cdot (x_j \cdot x_k),
\]
which translates to:
\[
\sum_{l} c_{ij}^l c_{lk}^n = \sum_{m} c_{jk}^m c_{im}^n \quad \text{for all } i,j,k,n.
\]
These are polynomial equations in the \( c_{ij}^k \) and are \( GL(d) \)-invariant.

\textbf{Case 3: Graded Algebras.}
Let $d_n = \dim A_n$. Suppose \( A = \bigoplus_{n \geq 0} A_n \) with \( \deg(x_i) = p_i \). Then:
\[
c_{ij}^k \neq 0 \Rightarrow p_k = p_i + p_j.
\]
Equivalently,
\[
c_{ij}^k = 0 \quad \text{unless} \quad \deg(x_k) = \deg(x_i) + \deg(x_j).
\]
The grading is preserved by the group \( \prod_{n > 0} GL(d_n) \), acting on \( \bigoplus_{n \geq 0} A_n \).

\textbf{Case 4: Frobenius.}
A connected graded \(\mathbb{K}\)-algebra \(A^!\) of finite dimension is \emph{Frobenius of degree \(n\)} if there is a nondegenerate pairing
\[
\langle-,-\rangle : A^!\otimes A^! \;\longmapsto\; \mathbb{K}(n) \quad \text{satisfying} \quad  
\langle a,\,b\,c\rangle = \langle a\,b,\,c\rangle.
\]

If \(A\) is AS-regular of global dimension 4, generated in degree 1 by four elements, then its Koszul dual \(A^!\) has Hilbert series \((1,4,6,4,1)\) (equivalently, a minimal resolution
\[
0\;\to\;A(-4)\;\to\;A(-3)^{4}\;\to\;A(-2)^{6}\;\to\;A(-1)^{4}\;\to\;A\;\to\;\mathbb{K}\;\to\;0,
\]
and \(h_A(t)=(1-t)^{-4}\)).  In \cite[Thm.~4.3 and Prop.~5.10]{MR1388568}, and also \cite[Cor.~D]{lu2007koszul}, it is shown that an \(A\) is AS-regular if and only if \(A^!\) is Frobenius.  So we can choose a graded bases
\[
\{\alpha_i\}_{i=1}^4,\ 
\{\beta_j\}_{j=1}^6,\ 
\{\zeta_k\}_{k=1}^4,\ 
\{\delta\},
\]
of $A_1^!,\ldots,A_4^!$ respectively.  Since $A^!$ is graded, the only nonzero products are
\[
\alpha_i\alpha_j=\sum_{k=1}^6d_{ij}^k\,\beta_k,\quad
\alpha_i\beta_j=\sum_{k=1}^4s_{ij}^k\,\zeta_k,\quad
\beta_i\alpha_j=\sum_{k=1}^4t_{ij}^k\,\zeta_k,
\]
\[
\alpha_i\zeta_j=u_{ij}\,\delta,\quad
\zeta_i\alpha_j=r_{ij}\,\delta,\quad
\beta_i\beta_j=v_{ij}\,\delta,
\]

Since $A^!$ is Frobenius, 
the matrices \(T=(t_{ij}), U =(u_{ij})\) and \(V=(v_{ij})\) are invertible.
 So we see that the Frobenius condition is open in the Zariski topology.
 Define
\[
X_4 = \left\{ c_{ij}^k \in \mathbb{A}^{16^3} \ \middle| \
\begin{array}{l}
A \text{ is unital, associative, graded, Frobenius}, \\
h_{A^!}(t) = (1 + t)^4
\end{array} \right\}.
\]

Let \( G \subseteq \prod \mathrm{GL}(A_i) \) be the group preserving the grading and unit. Then we define the moduli stack of unital, associative Frobenius algebras with Hilbert Series $(1+t)^4$ is given by
\[
\mathcal{A}_4 = [X_4 / G].
\]

The universal family is given by $G$-equivariant family over $X_4$ whose fibre over $(c_{ij}^k) \in X_4$ is the algebra with these structure constants.

A flat family of graded Frobenius algebras $A_S$ over a scheme $S$ with Hilbert Series $(1+t)^4$ is given by locally free sheaves $A_S = \bigoplus A_i$ with $\rank(A_i) = \binom{4}{i}$ for $0 \leq i \leq 4$ and a multiplication map $$A_S \otimes_S A_S \to A$$ satisfying the conditions to be unital, associative and Frobenius.

\begin{theorem}
   $\mathcal{A}_4$ is the moduli stack parameterizing the universal family of Frobenius algebras of Hilbert Series $(1+t)^4$ upto graded isomorphism. For any flat family of such algebras over a base scheme $S$, there exists a morphism $S \to \mathcal{A}_4$ such that the given family is the pullback of the universal family along this morphism.
\end{theorem}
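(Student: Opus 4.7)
The plan is to invoke the general formalism of quotient stacks: once we exhibit a $G$-equivariant family of the correct type on the atlas $X_4$, both the universal family and its classifying property are essentially built into the construction of $[X_4/G]$. The argument then splits naturally into three steps: exhibiting the universal family on $X_4$, producing a classifying morphism from a given flat family, and verifying that this morphism recovers the original family under pullback.

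First I would build the universal family directly on $X_4$. A point of $X_4$ is by definition a collection of structure constants $(c_{ij}^k)$ defining a unital, associative, graded, Frobenius algebra with Hilbert series $(1+t)^4$. Taking $\mathcal{A}_{\mathrm{univ}} = \bigoplus_{n=0}^4 \mathcal{O}_{X_4}^{\binom{4}{n}}$ equipped with the tautological multiplication $\mu_{\mathrm{univ}}$ read off from the $c_{ij}^k$, the defining polynomial equations of $X_4$ state exactly that $(\mathcal{A}_{\mathrm{univ}}, \mu_{\mathrm{univ}})$ is a flat family of the required type. Since $G$ acts on $X_4$ by graded change of basis preserving the unit, this action lifts canonically to a $G$-equivariant structure on $(\mathcal{A}_{\mathrm{univ}}, \mu_{\mathrm{univ}})$, and the pair therefore descends to a family on $[X_4/G] = \mathcal{A}_4$.

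Next, given a flat family $A_S = \bigoplus A_i$ on $S$ with multiplication $\mu_S$, I would construct the classifying morphism. Each $A_i$ is locally free of rank $\binom{4}{i}$, so there is an fppf cover $\{U_\alpha \to S\}$ over which every $A_i|_{U_\alpha}$ admits a basis compatible with the degree-zero unit section. Each such trivialization produces structure constants $c_{ij}^k(\alpha) \in \Gamma(U_\alpha, \mathcal{O}_S)$ which, by the hypothesis on $A_S$, satisfy the defining conditions of $X_4$ and therefore determine a morphism $\varphi_\alpha : U_\alpha \to X_4$. On overlaps, two trivializations differ by an element $g_{\alpha\beta} \in G$ preserving grading and unit, and the $g_{\alpha\beta}$ satisfy the cocycle condition. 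These data assemble into a principal $G$-bundle $\pi : P \to S$ together with a $G$-equivariant morphism $f : P \to X_4$, which is exactly the data of a morphism $S \to [X_4/G] = \mathcal{A}_4$. Pulling the universal family back along this morphism reproduces each local trivialization on $U_\alpha$ by construction, and the $G$-equivariance of $f$ ensures the overlap data match, so the pullback is isomorphic to $(A_S, \mu_S)$.

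The main obstacle is essentially bookkeeping: one must verify that the local structure constants really do land in $X_4$ in families, so that unitality, associativity, gradedness, and the Frobenius condition persist under base change, and that the transition cocycle takes values in the subgroup $G \subseteq \prod \mathrm{GL}(A_i)$ preserving both the grading and the unit rather than in the full general linear group of the fibre. Both follow from the hypothesis that $A_S$ is a global unital, associative, graded, Frobenius family together with the existence of a global unit section trivializing $A_0$. Independence of the classifying morphism from the choice of cover and trivializations is the standard $2$-categorical universal property of the quotient stack: two choices differ by a $G$-automorphism of the bundle, which is precisely an isomorphism in $\mathcal{A}_4(S)$.
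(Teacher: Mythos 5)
Your proposal is correct and follows essentially the same route as the paper: use flatness to trivialize the family on a cover, read off structure constants landing in $X_4$, and glue the resulting local maps into a morphism $S \to [X_4/G] = \mathcal{A}_4$ along which the universal family pulls back to $A_S$. You spell out details the paper leaves implicit (the explicit universal family on $X_4$, the $G$-valued transition cocycle, and the principal-bundle description of a map to the quotient stack), but the underlying argument is the same.
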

\begin{proof} Let \( A_S \) be a flat family of graded Frobenius algebras with Hilbert series $(1+t)^4$ 
over \( S \). Flatness ensures that \( (A_S)_j \) is locally free of rank $\binom{4}{j}$ over \( S \), meaning there exists an open cover \( \{ U_i \} \) of \( S \) such that \( A_S \) is free over each \( U_i \). On each \( U_i \), the algebra \( A_S \) can be expressed as a free module, so we can choose a basis for \( A_{U_i} \), which allows us to compute the structure constants of the algebra.

These constants give a morphism \( U_i \to \mathcal{A}_4 \). Because \( \mathcal{A}_4 \) is a stack, the maps from the open cover glue together to form a global morphism \( \varphi: S \to \mathcal{A}_4 \), ensuring that \( A_S \) is the pullback of the universal family along \( \varphi \).
\end{proof}

The moduli stack contains a subset $\mathcal{KA}_4$ containing algebras that are also Koszul.  If we let $\mathcal{KAS}_4$ be the set isomorphism classes of
Koszul Artin-Schelter regular algebras of dimension four, then this set is in bijective correspondence with regular Koszul Artin-Schelter algebras of dimension four of type $14641.$

\begin{remark}
The correspondence \( R \mapsto R^\perp \), where \( R^\perp \) denotes the orthogonal complement of the relation space, defines a bijection
\[
\mathcal{KAS}_4 \underset{()^!}{\longleftrightarrow} \mathcal{KA}_4,
\]
between Koszul Artin--Schelter regular algebras of dimension four and Koszul Frobenius algebras of Hilbert Series $(1+t)^4.$

The Koszul property imposes infinite constraints on structure constants, \( \mathcal{KA}_4 \) is a subset of the space \( \mathcal{A}_4 \) of all Frobenius algebras.
We do not include the Koszul property in the definition of the moduli space since there are infinitely many conditions to ensure an algebra is Koszul.  

For a graded algebra $A$, the graded pieces $\Tor^A_i(\mathbb{K},\mathbb{K})_j$ record the internal degree of the $i$-th syzygies. If the $i$-th projective module $P^i$ is generated in degree $i$, then all syzygies in homological degree $i$ occur only in internal degree $i$. Thus the Koszul condition is
\[
\Tor^A_i(\mathbb{K},\mathbb{K})_j = 0 \quad \text{whenever } j \neq i.
\]

Since this must hold for every $i,j$, verifying Koszulity amounts to checking an infinite family of vanishing conditions.
In \cite[Theorem 5.1]{nonKoszul}, the authors construct a flat family $R_\alpha$ of non-Koszul commutative quadratic Artinian Gorenstein algebras that has a linear resolution for $\alpha$ steps for any integer $\alpha \geq 2.$
\end{remark}
One could possibly construct a better moduli space if we knew the answer to the following questions.

\begin{question}
Does there exist a graded associative Frobenius algebra with Hilbert Series $(1+t)^4$ that is not Koszul?
\end{question}

\begin{question}
    Are there finitely many algebraic conditions that ensure a graded associative Frobenius algebra with Hilbert Series $(1+t)^4$ is Koszul?
\end{question}

\section{Kodaira Spencer map}

In this section, we examine how first-order deformations of a flat family of graded algebras \( A \) over a base scheme \( S \) are detected by Hochschild cohomology. Let $p \in S$.  More specifically, we construct the \emph{Kodaira–Spencer map}, which assigns to each tangent vector \( v \in T_p S \) a class in the degree-zero component of the second Hochschild cohomology group \( \HH^2_0(A_p) \), where \( A_p \) is the fibre of \( A \) at a point \( p \in S \). This map encodes how infinitesimal variations in the base induce first-order algebra extensions of the fibre, and thus provides a key link between the geometry of the base and the deformation theory of the fibres.

\begin{definition}
\textnormal{\textbf{Hochschild Extensions:} Let $A$ be a $\mathbb{K}$-algebra, $M$ an $A$-bimodule, and \( \alpha: A \otimes A \to M \) a Hochschild 2-cocycle, i.e., a $\mathbb{K}$-linear map satisfying the Hochschild 2-cocycle condition:
\[
    a\alpha(b \otimes c) - \alpha(ab \otimes c) + \alpha(a \otimes bc) - \alpha(a \otimes b)c = 0
\]
for all \( a, b, c \in A \). The \textbf{Hochschild extension} \( T(A, M, \alpha) \) of $A$ by $M$ and $\alpha$ is the $\mathbb{K}$-algebra $A \oplus M$ with multiplication given by:
\[
    (a, m)(a', m') := (aa', am' + ma' + \alpha(a \otimes a')).
\]}
\end{definition}

\begin{theorem}\label{theorem:weibel}\cite[Theorem 9.3.1]{Weibel_1994}:
Given a $\mathbb{K}$-algebra $A$ and an $A-A$ bimodule $M$, the equivalence classes of Hochschild extensions are in 1-1 correspondence with the elements of the Hochschild cohomology module $\HH^2(A, M)$.
\end{theorem}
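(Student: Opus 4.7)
The plan is to prove the bijection by matching the two defining conditions of $\HH^2(A,M)$ with the two defining aspects of a Hochschild extension: the $2$-cocycle condition will correspond exactly to associativity of the twisted multiplication, and the $2$-coboundary condition will correspond exactly to equivalence of extensions (i.e., an algebra isomorphism $T(A,M,\alpha) \to T(A,M,\alpha')$ restricting to the identity on $M$ and inducing the identity on the quotient $A$).

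First I would verify, by directly expanding
\[
\bigl((a,m)(a',m')\bigr)(a'',m'') \quad\text{versus}\quad (a,m)\bigl((a',m')(a'',m'')\bigr)
\]
with the multiplication given in the definition, that the two sides agree if and only if the $M$-component identity
\[
a\alpha(a' \otimes a'') - \alpha(aa' \otimes a'') + \alpha(a \otimes a'a'') - \alpha(a \otimes a')a'' = 0
\]
holds for all $a,a',a'' \in A$. This is exactly the Hochschild $2$-cocycle condition, so the set of $\alpha$ for which $T(A,M,\alpha)$ is an associative algebra is precisely $Z^2(A,M)$. This yields a well-defined map from $Z^2(A,M)$ to (isomorphism classes of) Hochschild extensions, and I would show it is surjective onto equivalence classes by the usual section argument: given any square-zero extension $0 \to M \to E \to A \to 0$ of $\mathbb{K}$-algebras realising the prescribed bimodule structure, choose a $\mathbb{K}$-linear section $s : A \to E$, define $\alpha(a\otimes a') := s(a)s(a') - s(aa') \in M$, and check that associativity in $E$ forces $\alpha \in Z^2(A,M)$ while $(a,m) \mapsto s(a) + m$ is an isomorphism $T(A,M,\alpha) \cong E$.

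Next I would analyze equivalence. Any algebra homomorphism $\phi : T(A,M,\alpha) \to T(A,M,\alpha')$ that is the identity on $M$ and induces the identity on $T/M \cong A$ must have the form $\phi(a,m) = (a,\, m + b(a))$ for a unique $\mathbb{K}$-linear map $b : A \to M$. Imposing that $\phi$ respects multiplication and comparing $M$-components produces precisely
\[
\alpha'(a \otimes a') - \alpha(a \otimes a') = a\,b(a') - b(aa') + b(a)\,a' = (db)(a \otimes a'),
\]
so two cocycles give equivalent extensions if and only if they differ by a Hochschild $2$-coboundary; conversely every such $b$ produces an equivalence. Together with the surjectivity from the previous step, this shows that the assignment $\alpha \mapsto [T(A,M,\alpha)]$ descends to a bijection $\HH^2(A,M) = Z^2(A,M)/B^2(A,M) \longrightarrow \{\text{equivalence classes of Hochschild extensions}\}$.

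The main obstacle is purely computational rather than conceptual: keeping track of the bimodule actions and signs so that the associativity expansion lines up term-for-term with the cocycle formula, and dually so that the multiplicativity of $\phi$ lines up term-for-term with the coboundary formula. Once both bookkeeping checks are done cleanly, the correspondence is essentially forced, and the well-definedness of the map on cohomology classes follows automatically from what has been verified.
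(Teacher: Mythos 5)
The paper does not prove this statement; it is quoted directly from Weibel (Theorem 9.3.1) as a known result. Your proposal is the standard argument for that theorem and is correct: associativity of $T(A,M,\alpha)$ is equivalent to the $2$-cocycle condition, a $\mathbb{K}$-linear section of a square-zero extension produces a cocycle and an isomorphism with some $T(A,M,\alpha)$, and an equivalence $\phi(a,m)=(a,m+b(a))$ is multiplicative precisely when $\alpha-\alpha'=db$ (your sign is the opposite convention on $b$, which is harmless). Nothing is missing, so your write-up would serve as a self-contained proof of the cited result.
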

Let \( A \) be a flat family of graded \( \mathbb{K} \)-algebras over a base scheme \( S \). We introduce the following notation:
\begin{itemize}
    \item \( T_p S \) denotes the \textbf{Zariski tangent space} to the scheme \( S \) at a point \( p \in S \); it is defined as 
    \[
    T_p S = \operatorname{Hom}_{\mathbb{K}}(\mathfrak{m}_p / \mathfrak{m}_p^2, \mathbb{K}),
    \]
    where \( \mathfrak{m}_p \) is the maximal ideal of the local ring \( \mathcal{O}_{S,p} \).
    
    \item \( A_p \) denotes the \textbf{fibre algebra} of \( A \) at the point \( p \), given by 
    \[
    A_p = A \otimes_{S} \kappa(p),
    \]
    where \( \kappa(p) \) is the residue field at \( p \).
  
    \item \( \HH^2(A_p)_0 \) denotes the \textbf{degree zero component} of the second Hochschild cohomology group of the algebra \( A_p \).
\end{itemize}

\begin{theorem}\label{thm:KodairaSpencer} Let $S$ be a scheme over  $\mathbb{K}$-algebra and let \( p \in S \) be a closed point.  Let $A$ be a flat family of graded algebras over $S.$  Then there exists a natural linear map:
\[
\kappa_p : T_p  S \to \HH^2_0(A_p),
\]
called the \emph{Kodaira–Spencer map}.
\end{theorem}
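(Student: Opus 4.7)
The plan is to identify each tangent vector with a morphism from the dual numbers into $S$, pull back the flat family along this morphism to obtain a first-order deformation of $A_p$, and then extract a Hochschild 2-cocycle class from the resulting multiplication. The well-definedness of the class will follow directly from Theorem~\ref{theorem:weibel}, so most of the work reduces to constructing the cocycle carefully.

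First I would recall that a tangent vector $v \in T_p S$ corresponds to a $\mathbb{K}$-algebra homomorphism $\mathcal{O}_{S,p} \to \mathbb{K}[\epsilon]/(\epsilon^2)$, or equivalently a morphism $\iota_v : \Spec(\mathbb{K}[\epsilon]/(\epsilon^2)) \to S$ sending the closed point to $p$. Pulling the flat graded family $A$ back along $\iota_v$ yields a flat graded $\mathbb{K}[\epsilon]/(\epsilon^2)$-algebra $A_v$ with special fibre $A_p$. Since $\mathbb{K}[\epsilon]/(\epsilon^2)$ is local Artinian and $A_v$ is graded and flat, each homogeneous piece $(A_v)_i$ is free of rank $\dim_{\mathbb{K}} (A_p)_i$. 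Hence I can choose a graded splitting $A_v \cong A_p \oplus \epsilon A_p$ of $\mathbb{K}[\epsilon]/(\epsilon^2)$-modules, and write the multiplication in the form
\[
(a + \epsilon b)(c + \epsilon d) \;=\; ac \;+\; \epsilon\bigl(ad + bc + \alpha_v(a \otimes c)\bigr)
\]
for a unique $\mathbb{K}$-bilinear map $\alpha_v : A_p \otimes A_p \to A_p$. Because the splitting respects the grading, $\alpha_v$ is of internal degree zero, and the associativity of $A_v$ is exactly the Hochschild 2-cocycle condition on $\alpha_v$.

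Thus $A_v$ is a Hochschild extension $T(A_p, A_p, \alpha_v)$ in the sense of the definition preceding Theorem~\ref{theorem:weibel}, and by that theorem the class $[\alpha_v] \in \HH^2(A_p)$ is independent of the chosen splitting. Since $\alpha_v$ has internal degree zero, this class lies in $\HH^2_0(A_p)$, and I would define
\[
\kappa_p(v) \;:=\; [\alpha_v] \in \HH^2_0(A_p).
\]
Linearity of $\kappa_p$ follows from naturality: the formation $v \mapsto \iota_v^*A$ is $\mathbb{K}$-linear in $v$ (on the level of infinitesimal deformations, the sum $v_1 + \lambda v_2$ of tangent vectors pulls back to the Baer sum of the corresponding extensions), and the bijection in Theorem~\ref{theorem:weibel} between equivalence classes of extensions and $\HH^2$ is compatible with Baer sums and scalar multiplication.

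The main care point, rather than a genuine obstacle, will be showing that the construction is independent of the local trivialization of $A$ near $p$ and compatible with the grading; the first is subsumed by Theorem~\ref{theorem:weibel}, and the second reduces to the elementary observation that the splitting $A_v \cong A_p \oplus \epsilon A_p$ can be chosen in each internal degree separately, forcing $\alpha_v$ to preserve internal degree. Naturality in $S$ (which is implicit in the name \emph{Kodaira--Spencer map}) can be verified along the way and will be needed in Section~5 to identify $\kappa_p$ with the tangent map of the classifying morphism $S \to \mathcal{A}_4$.
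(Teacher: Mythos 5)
Your construction is correct and rests on the same two pillars as the paper's proof: flatness of the family produces a square-zero extension of $A_p$, and Theorem~\ref{theorem:weibel} converts that extension into a well-defined class in $\HH^2$. The one substantive difference is how linearity of $\kappa_p$ is obtained. You work one tangent vector at a time: each $v$ gives a morphism $\Spec\mathbb{K}[\epsilon]/(\epsilon^2)\to S$, you pull back to get an extension of $A_p$ by $A_p$ over the dual numbers, and you must then argue separately that $v\mapsto[\alpha_v]$ is linear, which you do by invoking compatibility of the construction with Baer sums and scalars --- correct, but this is the one point in your argument that is asserted rather than proved. The paper instead forms a single universal extension
\[
0 \to A_p \otimes_{\mathbb{K}} \mathfrak{m}/\mathfrak{m}^2 \to A_B/\mathfrak{m}^2 A_B \to A_p \to 0
\]
over $B/\mathfrak{m}^2$, whose class lies in $\HH^2(A_p, A_p\otimes_{\mathbb{K}}\mathfrak{m}/\mathfrak{m}^2)\cong \Hom_{\mathbb{K}}((\mathfrak{m}/\mathfrak{m}^2)^*, \HH^2(A_p))$, so that linearity in $v\in T_pS=(\mathfrak{m}/\mathfrak{m}^2)^*$ is built into the identification; your extension for a given $v$ is recovered from the universal one by pushing out along $\mathrm{id}\otimes v$. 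On the other hand, your treatment of the grading (choosing the splitting $A_v\cong A_p\oplus\epsilon A_p$ degree by degree so that $\alpha_v$ visibly has internal degree zero) is more explicit than the paper's, which dispatches this step in one sentence. If you keep the pointwise formulation, either spell out the Baer-sum compatibility or switch to the universal extension so that linearity comes for free.
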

\begin{proof}
\noindent Consider a flat family of graded algebras $A_S$ over a base scheme $S$. 
Let $\mathfrak{m}$ be the maximal ideal corresponding to the point \(p \in S \), we analyze the behavior of $A_S$ under specialization. We restrict to the local ring $B=\mathcal{O}_{S,p}$.
The filtration induced by \( \mathfrak{m} \) is the descending sequence of submodules
\[
A_B \supseteq \mathfrak{m}A_B \supseteq (\mathfrak{m}A_B)^2 \supseteq (\mathfrak{m}A_B)^3 \supseteq \dots
\]

 This filtration measures how \( A_B \) behaves under specialization at \( \mathfrak{m} \) by tracking how elements vanish modulo increasing powers of \( \mathfrak{m} \). 
\noindent The Zariski tangent space of $S$ at $\mathfrak{m}$ is given by $(\mathfrak{m}/\mathfrak{m}^2)^* = \text{Hom}_\mathbb{K}(\mathfrak{m}/\mathfrak{m}^2, \mathbb{K})$. Tensoring the short exact sequence
\[
    0 \to \mathfrak{m}/\mathfrak{m}^2 \to B/\mathfrak{m}^2 \to B/\mathfrak{m} \to 0
\]
with $A_B$ over $B$ and using flatness, we obtain
\[
0 \to \mathfrak{m}A_B / (\mathfrak{m}A_B)^2 \to A_B / (\mathfrak{m}A_B)^2 \to A_B / \mathfrak{m}A_B \to 0,
\]
which simplifies to 
\[
    0 \to A_B \otimes_B (\mathfrak{m}/\mathfrak{m}^2) \to A_B / (\mathfrak{m}A_B)^2 \to A_p \to 0.
\]
\begin{equation} \label{eq:correspondence}
0 \to A_p \otimes_\mathbb{K} (\mathfrak{m}/\mathfrak{m}^2) \to A_B/\mathfrak{m}^2A_B \to A_p \to 0
\end{equation}

From the Theorem \ref{theorem:weibel}, we see that the Hochschild extension (\ref{eq:correspondence}) gives an element $KS$ of the Hochschild Cohomology module $KS \in \HH^2(A_p, A_p \otimes_\mathbb{K} \mathfrak{m}/\mathfrak{m}^2) \cong \HH^2(A_p, A_p) \otimes \mathfrak{m}/\mathfrak{m}^2 \cong \mathrm{Hom}((\mathfrak{m}/\mathfrak{m}^2)^*, \HH^2(A_p))$.

Since $A$ is graded, we get the map:
$$T_pS = T_pB \to \HH^2_0(A_p)$$
This implies that a tangent vector in the Zariski tangent space maps to a first-order deformation of $ A_p .$
\end{proof}

\subsection{Extended Example: Deformations of Skew Polynomial Algebras}
\label{toricfamily}
We study the Hochschild cohomology and the Kodaira Spencer map in detail for the case of the family of skew polynomial rings.  The Hochschild cohomology of skew polynomial rings has been studied in \cite{grimley2016hochschild, matviichuk2024creatingquantumprojectivespaces}. We directly show that this family gives a component of the moduli space by showing that the Kodaira-Spencer map is an isomorphism for a generic member of the family.

Let $q_{ij} \in \mathbb{K}^\times$ for $1 \leq i < j \leq n$ and
for convenience, we set $q_{ii}=1$ and when $q_{ij} \neq 0,$ we set $q_{ji}=q_{ij}^{-1}$. Let $Q=(q_{ij})$ denote the multiplicatively skew-symmetric matrix.
Let $S=\mathbb{K}\langle x_1,\ldots,x_n\rangle$.
Let $I$ be the two sided ideal of $S$ generated by
$$x_jx_i - q_{ij} x_ix_j\quad \quad 1 \leq i < j \leq n.$$
We let $A = A_Q$ be $S/I$.
Note that $A$ has a $\mathbb{K}$-basis of ordered monomials and so we have
$$ A \simeq \bigoplus_{i_1\geq 0,\ldots, i_n \geq 0} \mathbb{K} x_1^{i_1}\ldots x^{i_n}_n$$
as vector spaces.  Note that the Hilbert Series of $A$ is $h_A(t) = 1/(1-t)^n$  then $A_Q$ is an AS-regular algebra.
We will compute the Hochschild cohomology
$\HH^2_0(A)$ using the degree zero part of the complex
$$ A(1)^n_0 \stackrel{d_1}{\longrightarrow} A(2)^{\binom{n}{2}}_0 \stackrel{d_2}{\longrightarrow}
  A(3)^{\binom{n}{3}}_0 $$
  We interpret $\HH^2_0(A)$ as graded first order deformations, and we first compute the kernel of $d_2$.
  
To compute the first-order deformations of \( A = S/I \), we perturb the relations. Consider \( \alpha_{ij} \in A \) for \( 1 \leq i < j \leq n \). Let \( \mathbb{K}_1 = \mathbb{K}[\varepsilon]/(\varepsilon^2) \) and define \( a_{ij} = \varepsilon \alpha_{ij} \). The perturbed relations are given by
\[
x_j x_i - q_{ij} x_i x_j - a_{ij}, \quad \text{for} \quad 1 \leq i < j \leq n.
\]
Let \( I_1 \) be the two-sided ideal generated by these relations. We then obtain the following family of algebras:
\[
A_1 = \mathbb{K}_1\langle x_1, \dots, x_n \rangle / I_1.
\]

\begin{proposition}\cite[Corollary 3.9]{MR1644207}
  The algebra $A_1$ is a  first order deformation of $A$ if and only if one of the three equivalent conditions hold:
  \begin{itemize}
    \item$A_1$ is a flat $\mathbb{K}_1$ algebra with a fixed isomorphism $A_1/(\varepsilon) \simeq A$,
    \item $\varepsilon A_1 \simeq A$ as $\mathbb{K}$-vector spaces,
    \item $(a_{ij})$ are a Hochschild cocyle giving a class in $\HH^2(A)$.
  \end{itemize}
\end{proposition}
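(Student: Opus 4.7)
The plan is to verify the chain (i) $\Leftrightarrow$ (ii) $\Leftrightarrow$ (iii), using only the structure of $\mathbb{K}_1 = \mathbb{K}[\varepsilon]/(\varepsilon^2)$ and Theorem \ref{theorem:weibel}.

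For (i) $\Leftrightarrow$ (ii), since $\mathbb{K}_1$ is a local Artinian ring with maximal ideal $(\varepsilon)$ of square zero, the local criterion for flatness shows that a $\mathbb{K}_1$-module $M$ is flat iff it is free iff multiplication by $\varepsilon$ induces an isomorphism $M/\varepsilon M \xrightarrow{\sim} \varepsilon M$. Applied to $A_1$ with the fixed identification $A_1/\varepsilon A_1 \simeq A$, this is exactly the condition $\varepsilon A_1 \simeq A$ as $\mathbb{K}$-vector spaces. This step is standard commutative algebra and proceeds with minimal effort.

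For (ii) $\Leftrightarrow$ (iii), assuming (ii) the sequence $0 \to \varepsilon A_1 \to A_1 \to A \to 0$ becomes a short exact sequence $0 \to A \to A_1 \to A \to 0$ of $\mathbb{K}$-vector spaces in which $A_1$ is a $\mathbb{K}$-algebra realizing the middle term as a square-zero extension of $A$ by the $A$-bimodule $A$. This is precisely a Hochschild extension, and by Theorem \ref{theorem:weibel} it corresponds to a class in $\HH^2(A,A)$. I would extract the corresponding 2-cocycle by choosing the $\mathbb{K}$-linear section of $A_1 \twoheadrightarrow A$ sending each ordered monomial $x_1^{i_1}\cdots x_n^{i_n}$ to its tautological lift; a direct calculation then shows the cocycle evaluated on $x_j \otimes x_i$ for $i<j$ recovers $\alpha_{ij}$ up to coboundary. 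Conversely, running the Hochschild extension construction of Theorem \ref{theorem:weibel} on a representative cocycle reproduces the algebra $A_1$ with the vector space structure demanded by (ii).

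The main obstacle lies in making the cocycle identity concrete: one must confirm that associativity of the perturbed multiplication on $A_1$, expanded modulo $\varepsilon^2$, is equivalent to the Hochschild 2-cocycle condition on the $\alpha_{ij}$, and that this suffices for flatness. Evaluating associativity on the triples $(x_i,x_j,x_k)$ and using the undeformed relations to reduce everything to ordered monomials produces exactly the cocycle identity restricted to generators. Because $A$ is Koszul with defining ideal concentrated in degree two (as developed in Section 2), no higher-degree consistency conditions arise: the cocycle extends uniquely from generators to all of $A \otimes A$, and the Hilbert series of $A_1$ over $\mathbb{K}_1$ matches that of $A \otimes_\mathbb{K} \mathbb{K}_1$ iff this finite cocycle check on generators holds. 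This is the essential calculation that bridges the concrete perturbation data $(\alpha_{ij})$ with the abstract classification by $\HH^2_0(A)$.
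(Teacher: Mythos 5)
Your proof is essentially correct, but note that the paper does not actually prove this proposition: it is quoted with a citation to \cite[Corollary 3.9]{MR1644207}, and the text immediately following it only carries out the explicit associativity computation $x_k(x_jx_i)$ versus $(x_kx_j)x_i$ for the skew polynomial family, which is an application of the proposition rather than a proof of it. Your argument supplies the missing reasoning along standard lines: (i) $\Leftrightarrow$ (ii) is the local criterion for flatness over the Artinian local ring $\mathbb{K}[\varepsilon]/(\varepsilon^2)$ (flat $\Leftrightarrow$ free $\Leftrightarrow$ $\Tor_1(A_1,\mathbb{K})=0$ $\Leftrightarrow$ the natural surjection $A \cong A_1/\varepsilon A_1 \to \varepsilon A_1$ is injective), and (ii) $\Leftrightarrow$ (iii) identifies the resulting square-zero extension $0 \to A \to A_1 \to A \to 0$ with a Hochschild extension via Theorem \ref{theorem:weibel}. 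Two points deserve slightly more care than your sketch gives them. First, condition (ii) must be read as asserting that the \emph{natural} map $A \to \varepsilon A_1$ is an isomorphism (equivalently, a graded dimension count), not merely the existence of some abstract vector-space isomorphism; you use this reading implicitly. Second, in the direction (iii) $\Rightarrow$ (ii) you should make explicit that the data $(\alpha_{ij})$ only define a map on the relation space $R \subseteq V\otimes V$, and that the passage from this to a genuine 2-cocycle on $A\otimes A$ with the degree-3 overlap condition as the \emph{only} obstruction is exactly where Koszulity of $A$ enters (this is the content of the cited Braverman--Gaitsgory-type result); you invoke this correctly but it is the one step that is a theorem rather than a formality. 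With those caveats, your plan is a faithful reconstruction of the cited result, and your ``main obstacle'' paragraph corresponds precisely to the explicit computation the paper performs right after the proposition.
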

In the deformation theory of associative algebras, flat families are constructed with the requirement that associativity is preserved. To verify this in our setting, we check that associativity holds under the perturbed relations by explicitly computing and comparing the reductions of \(  x_k(x_j x_i) \) and \((x_k x_j)x_i \) to ordered monomials for \( k > j > i \):
\begin{eqnarray*}
  x_k(x_jx_i) 
& = & x_k(q_{ij} x_ix_j + a_{ij}) = q_{ij} x_kx_ix_j+ x_k a_{ij} \\
& = & q_{ij} (q_{ik}x_ix_k + a_{ik})x_j +x_ka_{ij}\\
& = & q_{ij}q_{ik}x_ix_kx_j +q_{ij}a_{ik} x_j + x_k a_{ij}\\
& = & q_{ij}q_{ik}q_{jk} x_ix_jx_k + q_{ij}q_{ik}x_ia_{jk} + q_{ij}a_{ik}x_j + x_ka_{ij},
\end{eqnarray*}
\begin{eqnarray*}
(x_kx_j)x_i
& = & (q_{jk} x_jx_k + a_{jk})x_i = q_{jk} x_jx_kx_i+ a_{jk}x_i \\
& = & q_{jk}x_j(q_{ik}x_ix_k+a_{ik}) + a_{jk}x_i \\
& = & q_{jk}q_{ik}x_jx_ix_k+q_{jk}x_ja_{ik}+a_{jk}x_i \\
& = & q_{jk}q_{ik}q_{ij}x_ix_jx_k + q_{jk}q_{ik} a_{ij} x_k +q_{jk}x_ja_{ik} + a_{jk}x_i.
\end{eqnarray*}

Hence we have a first order deformation if and only if
$$q_{ij}q_{ik}x_ia_{jk}+q_{ij}a_{ik}x_j+x_ka_{ij} \\
= q_{jk}q_{ik} a_{ij}x_k + q_{jk}x_ja_{ik} +a_{jk}x_i$$

We will solve these equations for graded deformations, i.e.~when $\alpha_{ij} \in \varepsilon A_2$ are quadratic.

So we let
$$\alpha_{ij} = \sum_{1 \leq \ell \leq m \leq n} \alpha_{ij}^{\ell m} x_\ell x_m$$
be arbitrary quadratic elements of $A$.  We look at the above condition for these $\alpha$.
So we have \begin{align}\label{eq:deformation}
    & \sum_{1 \leq \ell \leq m \leq n}(q_{ij}q_{ik}x_i\alpha_{jk}^{\ell m} x_\ell x_m+q_{ij}\alpha_{ik}^{\ell m} x_\ell x_mx_j+x_k\alpha_{ij}^{\ell m} x_\ell x_m) = \notag \\
    & \sum_{1 \leq \ell \leq m \leq n}( q_{jk}q_{ik} \alpha_{ij}^{\ell m} x_\ell x_mx_k + q_{jk}x_j\alpha_{ik}^{\ell m} x_\ell x_m +\alpha_{jk}^{\ell m} x_\ell x_mx_i) 
\end{align}
This gives cubic expressions in $A$, which we can separate into different equations, for example:
$$(q_{ij} q_{ik} \alpha_{jk}^{ii} -\alpha_{jk}^{ii}) x_i^3 = 0$$
giving that $\alpha_{jk}^{ii} =0$ when $q_{ij} q_{ik} \neq 1,$ and $i,j,k$ are distinct.
On examining the coefficients of different monomials, we obtain the following
equations:
$$ (q_{ij}q_{ik}-q_{i\ell})\alpha_{jk}^{i\ell} = 0 \quad  \{ j,k \} \cap \{i,\ell\} = \emptyset, \quad  j<k, \quad i \leq \ell,$$
  $$ q_{ij}(q_{ik}-q_{i\ell}) \alpha_{jk}^{j\ell} + q_{ij}(q_{j\ell}-q_{jk}) \alpha_{ik}^{i\ell} = 0 \quad \quad |\{ i,j,k \}| =3.$$

We note that these conditions are mostly independent.  The first set of equations is completely independent from each other and the second set of equations.
The second set of equations decomposes into sets of three equations
for each ordered pair $k,\ell$ with $k \neq \ell.$
Hence we can combine them into a matrix equation for $i<j<k$ we have
$$\begin{pmatrix}
  q_{jk} & & \\
    & q_{ik} & \\
    & & q_{ij}
\end{pmatrix}
\begin{pmatrix}
  0 & q_{k \ell}-q_{kk} & q_{jk} - q_{j\ell} \\
  q_{kk} - q_{k \ell} & 0 & q_{i\ell} - q_{ik} \\
  q_{j\ell} - q_{jk} & q_{ik} - q_{i\ell} & 0
\end{pmatrix}
\begin{pmatrix}
  \alpha^{ik}_{i\ell} \\ \alpha^{jk}_{j\ell} \\  \alpha^{kk}_{k\ell}
  \end{pmatrix} = 0.
$$

Recall that a skew symmetric matrix always has even rank, so we always have a solution
$$\begin{pmatrix}
  \alpha^{ik}_{i\ell} \\ \alpha^{jk}_{j\ell} \\  \alpha^{kk}_{k\ell}
\end{pmatrix}
\in \mathbb{K}
\begin{pmatrix}
  q_{i\ell} - q_{ik} \\ q_{j\ell} - q_{jk} \\ q_{k\ell} - q_{kk}
\end{pmatrix}.$$
These are all solutions when the matrix has rank two, otherwise the matrix must have rank zero, so $$ q_{i\ell} = q_{ik},q_{j\ell} = q_{jk}, q_{k\ell} = q_{kk}$$
and there are no conditions on the corresponding  $\alpha^{ik}_{i\ell},\alpha^{jk}_{j\ell},\alpha^{kk}_{k\ell}.$  Hence we can conclude the following result.

\begin{definition}
Let $Q$ be a multiplicative skew symmetric matrix, then $Q$ is {\bf generic} if 
  $$q_{ij} \neq 0 \quad\text{for}\quad i<j,$$
  $$q_{ij}q_{ik} \neq q_{i\ell} \quad \text{for} \quad j<k, i \leq \ell,$$
  $$ Q \notin V(q_{i\ell} = q_{ik},q_{j\ell} = q_{jk}, q_{k\ell} = q_{kk}).$$
\end{definition}

\begin{proposition}
Let $A_1$ be determined by the $(\alpha_{ij})$ then if $Q$ is generic $A_1$ is a flat deformation if and only if the $(\alpha_{ij})$ satisfy
  $$ \alpha_{jk}^{i\ell} = 0 \quad \quad j < k, i \leq \ell,$$
$$\begin{pmatrix}
  \alpha^{ik}_{i\ell} \\ \alpha^{jk}_{j\ell} \\  \alpha^{kk}_{k\ell}
\end{pmatrix}
\in \mathbb{K}
\begin{pmatrix}
  q_{i\ell} - q_{ik} \\ q_{j\ell} - q_{jk} \\ q_{k\ell} - q_{kk}
\end{pmatrix}.$$
\end{proposition}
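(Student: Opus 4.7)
The plan is to assemble the proposition from the computations already carried out in the preceding pages. By the cited equivalence (flatness of $A_1$ over $\mathbb{K}_1 = \mathbb{K}[\varepsilon]/(\varepsilon^2)$ iff the perturbation defines a Hochschild 2-cocycle), it suffices to determine exactly which tuples $(\alpha_{ij})$ make the perturbed relations $x_jx_i - q_{ij}x_ix_j - \varepsilon \alpha_{ij}$ associative modulo $\varepsilon^2$. That associativity has already been reduced to identity \eqref{eq:deformation} via the double reduction of $x_k(x_jx_i)$ and $(x_kx_j)x_i$. So the whole task is to expand \eqref{eq:deformation} into monomial coefficient equations and then apply the genericity hypotheses.

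First I would expand both sides of \eqref{eq:deformation} using the commutation rules $x_mx_p = q_{pm}x_px_m$ (valid modulo $\varepsilon$ on the right side of each $\alpha$ which is already of order $\varepsilon$) to push every cubic term into the ordered basis $\{x_rx_sx_t : r\le s\le t\}$. Since the $\alpha_{ij}$ themselves are quadratic combinations $\sum \alpha_{ij}^{\ell m} x_\ell x_m$, each term in \eqref{eq:deformation} becomes an ordered monomial times a scalar combination of the $\alpha_{ij}^{\ell m}$ and the $q_{\cdot \cdot}$. Matching coefficients of each ordered cubic monomial yields exactly the two families of equations already isolated in the text:
\begin{align*}
(q_{ij}q_{ik}-q_{i\ell})\,\alpha_{jk}^{i\ell} &= 0, \quad \{j,k\}\cap\{i,\ell\}=\emptyset,\ j<k,\ i\le\ell,\\
q_{ij}(q_{ik}-q_{i\ell})\,\alpha_{jk}^{j\ell} + q_{ij}(q_{j\ell}-q_{jk})\,\alpha_{ik}^{i\ell} &= 0, \quad |\{i,j,k\}|=3.
\end{align*}

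Next I would apply the first two genericity conditions. Since $q_{ij}q_{ik}\ne q_{i\ell}$ for $j<k$, $i\le \ell$, the first family forces $\alpha_{jk}^{i\ell}=0$ for all admissible indices with $\{j,k\}\cap\{i,\ell\}=\emptyset$. For the second family, I would organize the equations by choosing an ordered pair $k,\ell$ with $k\ne \ell$ and letting $i<j<k$ range over the other indices, producing the $3\times 3$ matrix equation displayed in the excerpt. I would verify that this matrix is indeed skew-symmetric after multiplying by the invertible diagonal factor $\mathrm{diag}(q_{jk},q_{ik},q_{ij})$; this is a direct check from the coefficients already listed.

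Finally I would invoke the linear-algebra fact that a $3\times 3$ skew-symmetric matrix has rank $0$ or $2$. The third genericity condition excludes the rank-$0$ locus $V(q_{i\ell}=q_{ik},\,q_{j\ell}=q_{jk},\,q_{k\ell}=q_{kk})$, so the matrix has rank exactly $2$, and its kernel is the one-dimensional subspace spanned by the cofactor vector $(q_{i\ell}-q_{ik},\,q_{j\ell}-q_{jk},\,q_{k\ell}-q_{kk})^T$. This gives precisely the stated condition on the triples $(\alpha^{ik}_{i\ell},\alpha^{jk}_{j\ell},\alpha^{kk}_{k\ell})$, and combined with the vanishing from the first step, yields both directions of the equivalence.

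The bulk of the work is mechanical; the main obstacle is the bookkeeping for the expansion step, namely ensuring that every ordered cubic monomial is accounted for and that no equation has been missed when separating coefficients in \eqref{eq:deformation}. A careful case split according to whether the indices $\ell,m$ in $\alpha_{ij}^{\ell m}$ overlap with $\{i,j,k\}$ is what organizes the computation and cleanly produces exactly the two families above.
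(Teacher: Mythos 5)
Your proposal is correct and follows essentially the same route as the paper: the paper's justification for this proposition is precisely the preceding derivation (reduction of associativity to equation~\eqref{eq:deformation}, extraction of the two families of coefficient equations, elimination via the first two genericity conditions, and the observation that the resulting $3\times 3$ system is a diagonal matrix times a skew-symmetric matrix of even rank, which the third genericity condition forces to have rank two with kernel spanned by the stated vector). Your reassembly of that computation, including the case split on how $\{\ell,m\}$ meets $\{i,j,k\}$, matches the paper's argument step for step.
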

Since $A$ is $\mathbb{Z}^4$ graded, we can grade the Hochschild cohomology by $\mathbb{Z}^4$.  We determine the degrees of the infinitesimal deformations given above in terms of this finer grading.
\begin{definition}
\textnormal{\textbf{Grading Shift in Structure Coefficients:} Let \( A \) be a \( \mathbb{Z}^n \)-graded algebra generated by homogeneous elements \( x_1, x_2, \dots, x_n \), where each \( x_i \) is assigned a multi-degree \( \deg(x_i) = e_i \), the \( i \)-th standard basis vector of \( \mathbb{Z}^n \).}

\textnormal{For structure coefficients \( \alpha_{ij}^{kl} \) appearing in the relation
\[
x_i x_j = \alpha_{ij}^{kl} x_k x_l,
\]
the grading shift induced by \( \alpha_{ij}^{kl} \) is given by the degree difference:}
\[
\deg(\alpha_{ij}^{kl}) = \deg(x_k x_l) - \deg(x_i x_j).
\]
\textnormal{Since \( \deg(x_k x_l) = e_k + e_l \) and \( \deg(x_i x_j) = e_i + e_j \), we obtain:
\[
\deg(\alpha_{ij}^{kl}) = e_k + e_l - e_i - e_j.
\]
This vector \( \deg(\alpha_{ij}^{kl}) \in \mathbb{Z}^n \) encodes how the grading shifts when \( x_i x_j \) is rewritten in terms of \( x_k x_l \).}
\end{definition}

\begin{proposition} \label{prop:ksmapinjsurj}
Let \( A = \mathbb{K}_Q[x_1, x_2, x_3, x_4] \) be the quantum polynomial algebra associated with a generic parameter matrix \( Q = (q_{ij}) \). Then the Kodaira–Spencer map associated to the first-order deformation of \( A \) is both injective and surjective.   
\end{proposition}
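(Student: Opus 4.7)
The plan is to decompose $\HH^2_0(A)$ according to the $\mathbb{Z}^4$-multigrading on $A$ (induced by $\deg x_i = e_i$), writing $\HH^2_0(A)=\bigoplus_{\mathbf{d}\in \mathbb{Z}^4}\HH^2_{0,\mathbf{d}}(A)$, and then analyzing each summand using the cocycle conditions derived above. First I would compute the Kodaira--Spencer map explicitly: taking the parameter space to be $S = (\mathbb{K}^\times)^{6}$ with coordinates $q_{ij}$ for $i<j$, we have $T_Q S \cong \mathbb{K}^{6}$, and a tangent direction $(\dot q_{ij})$ corresponds to the perturbed relations $x_jx_i - (q_{ij}+\varepsilon\dot q_{ij})x_ix_j$. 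In the notation above this is the cocycle with $\alpha_{ij} = \dot q_{ij}\,x_ix_j$ and all other relations unchanged. These classes lie entirely in the multidegree-$\mathbf{0}$ component $\HH^2_{0,\mathbf{0}}(A)$ and span a $6$-dimensional subspace.

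The heart of the argument is to show $\HH^2_0(A)$ is already concentrated in multidegree $\mathbf{0}$ and has dimension $6$. Because $x_ix_j$ is the unique monomial of multidegree $e_i+e_j$, a cocycle component $\alpha_{ij}$ of multidegree $\mathbf{0}$ is forced to be a scalar multiple of $x_ix_j$. The first family of cocycle conditions $(q_{ij}q_{ik}-q_{i\ell})\alpha_{jk}^{i\ell}=0$ involves only coefficients with $\{i,\ell\}\cap\{j,k\}=\emptyset$, which have nonzero multidegree, so it is vacuous in multidegree $\mathbf{0}$; similarly the $3\times 3$ matrix equation only constrains coefficients of multidegree $e_k-e_\ell \ne \mathbf{0}$. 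Hence every $(c_{ij})$ produces a cocycle $\alpha_{ij} = c_{ij}x_ix_j$. The only $\beta=(\beta_i)$ with components in multidegree $\mathbf{0}$ is the diagonal $\beta_i = b_i x_i$, on which the calculation already carried out gives $d_1(\beta)=0$. Thus $\HH^2_{0,\mathbf{0}}(A) \cong \mathbb{K}^{6}$ and the Kodaira--Spencer map is a bijection onto this summand.

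It remains to verify $\HH^2_{0,\mathbf{d}}(A) = 0$ for every $\mathbf{d}\ne \mathbf{0}$. The first family of constraints combined with the genericity condition $q_{ij}q_{ik}\ne q_{i\ell}$ forces every $\alpha_{jk}^{i\ell}$ with disjoint upper and lower index sets to vanish. What remains in each nonzero multidegree is the $1$-dimensional solution space of the $3\times 3$ skew-symmetric matrix equation, spanned by $(q_{i\ell}-q_{ik},\, q_{j\ell}-q_{jk},\, q_{k\ell}-q_{kk})$ for each triple $i<j<k$ and each $\ell$. I would exhibit each such $1$-parameter family as a coboundary $d_1(\beta)$, where $\beta$ is supported in a single coordinate (a natural candidate is $\beta_k = b\,x_\ell$ with all other components zero). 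The computation is a direct expansion of $d_1(\beta)_{ij}$ via the skew relations, matching the coefficients of $x_ix_k$, $x_jx_k$ and $x_k^2$ against the three entries of the solution vector; because the skew relations produce exactly the differences $q_{ab}-q_{cd}$ in the right positions, this identification is structurally a rearrangement of the same associativity computation that produced the matrix equation in the first place.

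Combining the three steps, $\HH^2_0(A) = \HH^2_{0,\mathbf{0}}(A) \cong \mathbb{K}^{6}$, and the Kodaira--Spencer map $T_Q S \to \HH^2_0(A)$ is a surjection between spaces of dimension $6$, hence a bijection. The main obstacle is the last identification of each $1$-parameter family of solutions as a coboundary: choosing the right $\beta$ for each triple $(i,j,k)$ and each admissible $\ell$, handling the subcases depending on the position of $\ell$ relative to $i,j,k$ (which affects monomial orderings), and confirming that the scalar factors produced by $d_1$ really span the required vector in every case is where the most delicate bookkeeping lies.
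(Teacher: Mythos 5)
Your proposal is correct, and it is in fact more complete than the proof the paper gives. The paper's argument is a single paragraph asserting that genericity forces all cross terms to vanish so that only the diagonal coefficients $\alpha_{ij}^{ij}$ survive; but at the level of cocycles this is not literally true, since the immediately preceding Proposition shows that each $3\times 3$ skew-symmetric system retains a one-dimensional solution space (an odd-size skew-symmetric matrix always has nontrivial kernel), so the space of degree-zero cocycles is $18$-dimensional rather than $6$-dimensional. What the paper leaves implicit is that these extra families die in cohomology. Your proposal supplies exactly this step: you refine by the $\mathbb{Z}^4$-multigrading (which the paper sets up in the definition just before the proposition but never invokes in its proof), identify the Kodaira--Spencer image with the multidegree-$\mathbf{0}$ summand $\mathbb{K}^6$ of diagonal coefficients, on which $d_1$ vanishes because $\beta(x_i)=b_ix_i$ rescales each relation by $(b_i+b_j)$, and then exhibit each one-parameter family in multidegree $e_\ell-e_k$ as the coboundary of the map $\beta$ with $\beta(x_k)=b\,x_\ell$. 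That identification does go through: expanding $d_1(\beta)$ on the relation $x_kx_i-q_{ik}x_ix_k$ and reordering via the skew relations produces the coefficient $b(q_{i\ell}-q_{ik})$ on $x_ix_\ell$, which is exactly the spanning vector of the kernel of the $3\times 3$ system, and the third genericity condition guarantees this vector is nonzero, so the coboundary sweeps out the whole solution line. The resulting count ($18$ cocycles minus $12$ coboundaries) matches $\dim\HH^2_0(A)=6=\dim T_QS$ from the tables and yields both injectivity and surjectivity; your route is the one that actually proves surjectivity, which the paper's paragraph only asserts.
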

\begin{proof}
In the generic case, the algebraic relations force all cross terms to vanish due to the genericity conditions on the \(q_{ij}\). The only nonzero coefficient that consistently satisfies all the relations is the diagonal coefficient \(\alpha_{ij}^{ij}\), which corresponds to the deformation of the same pair of generators.
Thus,
\[
\alpha_{ij} = \alpha_{ij}^{ij} x_i x_j
\]
This means that the only surviving deformations are those that maintain the structure of the original quadratic relations.
i.e.
\[q_{ij} \longmapsto q_{ij} + \varepsilon \alpha_{ij}^{ij}\]
Therefore, the Kodaira Spencer map at a generic point is both injective and surjective.
\end{proof}
\section{Computation of the Kodaira Spencer map}

We compiled a list of known families of AS-regular algebras of type \textit{(14641)} from \cite{Davies_2016,ExoticElliptic, grimley2016hochschild, Smith1992, VANCLIFF199463, 10.1112/jlms/jdp057, MR1429334, Pym2013PoissonSA, MR2529094, Jacobson}. The relations of each family and more precise references are in the Appendix. We compute the Hochschild cohomology and the Kodaira-Spencer map at a particular point in the family using GAP code~\cite{GAP4} by Félix Laroche, with source in \cite{dblp}.

The code computes the Hochschild cohomology as in example \ref{example:hochschild} using the Complex (\ref{eq:cochain complex})
and computes the Kodaira Spencer map as in Proposition \ref{prop:ksmapinjsurj}

Columns ``Alg" and ``Para" denote the algebra family and parameter set (\( \text{Para} = \dim T_p S \)). ``Field" indicates the base field used in the computation. The ``Evaluated at" section lists parameter values (e.g., h, p, f) from the families listed in the Appendix; $\dim \HH_0^i$ gives Hochschild cohomology dimensions. ``Inj" and ``Surj" refer to injectivity and surjectivity of the Kodaira–Spencer (K–S) map. All computations use GAP code \cite{GAP4} by Félix Laroche, with source in \cite{dblp}. 

The table includes a column, \textbf{rk}, indicating the rank of the Kodaira-Spencer map. This value satisfies the following:
\begin{itemize}
    \item The K–S Map is \textbf{injective} if and only if
    $$
    \mathrm{Rank}(\text{K–S Map}) = \dim T_p S.
    $$
    \item The K–S Map is \textbf{surjective} if and only if
    $$
    \mathrm{Rank}(\text{K–S Map}) = \dim \HH^2_0(A).
    $$
\end{itemize}

\noindent In their paper \cite{MR2529094}, the authors classify four-dimensional regular domains of the form \( (kQ[x_1, x_2])_P[y_1, y_2; \sigma] \) up to isomorphism~\cite[Proposition~4.4]{MR2529094}. Using the notion of \emph{\(\Sigma\)-\(M\)-duality}~\cite[Definition~4.3]{MR2529094}, based on twist equivalence~\cite[Definition~3.3]{MR2529094}, they identify dual pairs - (E, J), (F, I), (N, P), (T, U), and (W, Z) which are isomorphic via exchange of \( x_i \)'s and \( y_i \)'s, and self-dual algebras: B, C, M, O, R, and S.

\begin{table}[H]
    \centering
    \renewcommand{\arraystretch}{1.5}

    \begin{tabular}{|c|c|c|c|c|c|c|c|c|c|c|c|c|c|}
        \hline
        Alg & Para & \multicolumn{4}{|c|} {Evaluated at} & \multicolumn{4}{|c|} {dim HH$_{0}^{i}$} & \multicolumn{3}{|c|}{K-S Map}    \\
        \hline
        Z-Z & &Field&h&p&f&0&1&2&3& Inj &  Surj & rk \\
        \hline
        \hline
        A & 1 & $\mathbb{Q}$ & 13/27&- &- &1& 2&2&2 & Yes & No & 1\\
        \hline
        B & 1 & $\mathbb{Q}(i)$&-5/12 &$\pm i$ &- & 1 & 2 & 1 & 0 & Yes & Yes & 1\\
        \hline
        C & 1 & $\mathbb{Q}(i)$ & 81/14&$\pm i$  &-& 1 & 2 & 1 & 0 & Yes & Yes & 1\\
        \hline
        D & 2 & $\mathbb{Q}$ &5/32 &91/16 &-&  1 & 2 & 2 & 2 & Yes & Yes & 2\\
        \hline
        (E,J) & 1 &$\mathbb{Q}(i)$&43/59 &$\pm i$  &- &   1 & 2 & 1 & 0 & Yes & Yes & 1 \\
        \hline
        (F,I) & 1 &$\mathbb{Q}(i)$& 13/75&$\pm i $ &-& 1 & 2 & 1 & 0 & Yes & Yes & 1\\
        \hline
        G & 3 & $\mathbb{Q}$& 90/67&101/19& 107/7 & 1 & 2 & 2 & 2 & No & Yes & 2\\
        \hline
        H & 2 &$\mathbb{Q}$ &71/18 & -&7/12 &  1 & 3 & 3 & 1 & Yes & No & 2\\
        \hline
        K & 3 & $\mathbb{Q}$ & 121/25&14/29 & 13/2 &1 & 3 & 3 & 1 & Yes & Yes & 3\\
        \hline
        L & 3 & $\mathbb{Q}$ &7/22 &9/5 &5/11 &1 & 3 & 3 & 1 & Yes & Yes & 3\\
        \hline
        M & 2 & $\mathbb{Q}$ & 4/21&- &55/49 & 1 & 2 & 2 & 2 & Yes & Yes  & 2\\
        \hline
        N & 3 & $\mathbb{Q}$&-5/8 &11/19& 3/20 & 1 & 2 & 2 & 2 & No & Yes & 2\\
        \hline
        O & 2 & $\mathbb{Q}$& 33/42&- &-31/2 & 1 & 2 & 2 & 2 & Yes & Yes & 2\\
        \hline
        P & 2 & $\mathbb{Q}$&17/23 &- & -20/43 &1 & 2 & 2 & 2 & Yes & Yes & 2\\
        \hline
        Q & 1 & $\mathbb{Q}$ &1/7 &- &-  & 1 & 2 & 1 & 0 & Yes & Yes & 1\\
        \hline
        R & 1 & $\mathbb{Q}$&-77/190 &- & - & 1 & 2 & 1 & 0 & Yes & Yes & 1\\
        \hline
        S & 1 & $\mathbb{Q}$  &2/9 &- &- & 1 & 2 & 1 & 0 & Yes & Yes & 1\\
        \hline
        (T,U) & 1& $\mathbb{Q}$&8/11 &- & - & 1 & 2 & 1 & 0 & Yes & Yes & 1\\
        \hline
        V & 1  &$\mathbb{Q}$ & 31/49&- &- & 1 & 2 & 1 & 0 & Yes & Yes & 1\\
        \hline
        (W,Z) & 2 & $\mathbb{Q}$& 9/43&- & 43/80 & 1 & 2 & 2 & 2& Yes & Yes & 2\\
        \hline
        X & 1& $\mathbb{Q}$& 5/231&- & - & 1 & 3 & 3 & 1& Yes & No & 1 \\
        \hline
        Y & 2 & $\mathbb{Q}$ & 63/12&- &101/15 & 1 & 2 & 3 & 4 & No & No & 1\\
        \hline
        \end{tabular}
    \caption{Hochschild Cohomology Dimensions and Kodaira-Spencer Map for Koszul Artin-Schelter Regular Algebras of Dimension Four from \cite{MR2529094}}
    \label{tab:HH_dimensions_KS_map_3}
\end{table}

\begin{table}[H]
    \centering
    \renewcommand{\arraystretch}{1.1}

    \begin{tabular}{|c|c|c|c|c|c|c|c|c|c|}
        \hline
        Alg & Para & Field & \multicolumn{4}{|c|} {$\dim \HH_{0}^{i}$} & \multicolumn{3}{|c|}{K-S Map}   \\
        \hline
         &  & & 0 & 1 & 2 & 3 & Inj & Surj & rk \\
        \hline
        \hline
      Central Extensions  & 11 & $\mathbb{Q}$  & 1 & 1 & 7 & 19 & No & Yes & 7\\ \hline
      Central Extensions twist & 5 & $\mathbb{Q}$ & 1 & 1 & 4 & 9 & No & Yes& 4\\ \hline
      Sklyanin & 2 & $\mathbb{Q}$  & 1 & 1 & 2 & 9 & Yes & Yes & 2\\ \hline
      Sklyanin twist & 2 & $\mathbb{Q}$ & 1 & 1 & 8 & 21 & Yes & No & 2\\ \hline
      $S_\infty$ & 2 &$\mathbb{Q}$ & 1 & 1 & 2 & 5 & Yes & Yes & 2\\ \hline
      $S_\infty$ twist & 2 & $\mathbb{Q}$ & 1 & 1 & 8 & 17 & Yes & No & 2\\ \hline
      $S_{d,i}$  & 3 &$\mathbb{Q}$  & 1 & 1 & 3 & 9 & Yes & Yes & 3\\ \hline
      $S_{d,i}$ twist  & 3 & $\mathbb{Q}$ & 1 & 1 & 8 & 17 & Yes & No & 3\\\hline
      Vancliff   & 3 & $\mathbb{Q}$  & 1 & 3 & 4 & 3 & Yes & No & 3\\ \hline
      Vancliff twist   & 3 & $\mathbb{Q}$  & 1 & 3 & 4 & 3 & Yes & No & 3\\ \hline
      Clifford  & 24 & $\mathbb{Q}$& 1 & 1  & 9  & 19 &No & Yes & 9\\ \hline
      $\mathbb{K}[x_1, x_2, x_3, x_4]$  & 0 & $\mathbb{Q}$& 1 & 16  & 60  & 80 & Yes & No & 0 \\ \hline
      $\mathbb{K}_Q[x_1, x_2, x_3, x_4]$  & 6 & $\mathbb{Q}$ & 1 & 4 & 6 & 4 & Yes & Yes & 6\\ \hline
      $L(1,1,2)^{\sigma}$ & 5 & $\mathbb{Q}$ & 1 & 2 & 4 & 8 & No & Yes & 4\\ \hline
      $S(2,3)^{\sigma}$ & 10 & $\mathbb{Q}$ & 1 & 4 & 9 & 9 & No & Yes & 9\\ \hline
      $R(3,a)$ & 1 & $\mathbb{Q}$ & 1 & 2 & 1 & 1 & Yes & Yes &1 \\ \hline
      Shelton-Tingey 3.1& 0 & $\mathbb{Q}(i)$ & 1 & 1  & 1 & 7 & Yes & No & 0\\ \hline
      Cassidy-Vancliff 1 & 1 & $\mathbb{Q}(i)$ & 1 & 1  & 1 & 7 &Yes & Yes & 1\\ \hline
      Cassidy-Vancliff 2 & 4 & $\mathbb{Q}$ & 1 & 1  & 9 & 19 &No & No & 3\\ \hline
      Cassidy-Vancliff 3& 4 & $\mathbb{Q}$ & 1 & 1  & 9 & 7 & Yes& No & 4\\ \hline
      Kirkman R& 0 &$\mathbb{Q}$ & 1& 2& 12& 22& Yes & No & 0\\ \hline
      Kirkman S& 0 & $\mathbb{Q}$& 1& 1& 4& 9& Yes & No & 0\\ \hline
      Kirkman T& 0 &$\mathbb{Q}$& 1& 1& 4&9 & Yes & No & 0\\ \hline
      $A_5$& 4 & $\mathbb{Q}(i)$ & 1& 2& 1& 0& No& Yes & 1\\ \hline
      Caines Algebra & 4 & $\mathbb{Q}$ & 1& 1& 8& 21& No& No & 1\\ \hline
      $\mathcal{F}_{\substack{(0,-1,-1,2)\\(0,0,0,0)}}$ &5 &$\mathbb{Q}$ & 1&3&4&3&No&Yes & 4\\ \hline
$\mathcal{F}_{\substack{(-1,-1,1,1)\\(0,0,0,0)}}$ &5& $\mathbb{Q}$ & 1&3&4&3&No&Yes & 4\\ \hline
 $\mathcal{F}_{\substack{(0,-1,-1,2)\\(0,0,0,0)\\(2,-1,-1,0)}}$&5& $\mathbb{Q}$ & 1&2&4&8&No&No &3 \\ \hline
 $\mathcal{F}_{\substack{(0,-1,-1,2)\\(-1,0,-1,2)\\(0,0,0,0)}}$ 
&4& $\mathbb{Q}$ & 1&2&2&2&No&Yes & 2\\ \hline
$\mathcal{F}_{\substack{(0,-1,-1,2)\\(0,0,0,0)\\(-1,-1,2,0)}}$ 
&4& $\mathbb{Q}$ & 1&2&2&2&No&Yes & 2\\ \hline
      
    \end{tabular}
    \caption{Hochschild Cohomology Dimensions and Kodaira-Spencer Map for Known Koszul Artin-Schelter Regular Algebras of Dimension Four from \cite{MR1429334, Smith1992, Davies_2016, ExoticElliptic, 10.1112/jlms/jdp057, grimley2016hochschild, Pym2013PoissonSA, LECOUTRE_2017, Shelton2001OnKA}}
    \label{tab:HH_dimensions_KS_map_1}
\end{table}

\begin{table}[H]
    \centering
    \renewcommand{\arraystretch}{1.5}

    \begin{tabular}{|c|c|c|c|c|c|c|c|c|c|}
        \hline
        Alg & Para & Field & \multicolumn{4}{|c|} {$\dim \HH_{0}^{i}$} & \multicolumn{3}{|c|}{K-S Map}  \\
        \hline
         &  & & 0 & 1 & 2 & 3 & Inj & Surj & rk \\
        \hline
        \hline
        Lie Algebra 1  & 0 & $\mathbb{Q}$  & 1 & 9 & 36 & 53 & Yes & No & 0\\ \hline
        Lie Algebra 2  & 0 & $\mathbb{Q}$ & 1 & 7 & 23 & 32 & Yes & No & 0\\ \hline
        Lie Algebra 3  & 1 & $\mathbb{Q}$ & 1 & 6 & 21 & 31 & Yes & No & 1\\ \hline
        Lie Algebra 4  & 1 &$\mathbb{Q}$ & 1 & 6 &  21&  31&  No& No & 0\\ \hline
        $\mathfrak{sl}_2$  & 0 & $\mathbb{Q}$ & 1 & 4 & 18 & 32 & Yes & No & 0\\ \hline
        Ore Ext. Type$A_1$ & 4 & $\mathbb{Q} (\zeta_3)$ & 1 & 2 & 3 & 4 & No & Yes & 3 \\ \hline
        Ore Ext. Type$A_2$  & 4 & $\mathbb{Q} (\zeta_3)$  & 1 & 2 & 3 & 4 & No & Yes & 3 \\ \hline
        Ore Ext. Type$A_3$  & 4 & $\mathbb{Q} (\zeta_3)$  & 1 & 2 & 3 & 4 & No & Yes & 3 \\ \hline 
        Ore Ext. Type$B_1$  & 2 & $\mathbb{Q} (\zeta_2)$ & 1 & 2 & 2 & 2 & Yes & Yes & 2 \\ \hline
        Ore Ext. Type$E_1$ & 1 & $\mathbb{Q} (\zeta_9)$ & 1 & 2 & 1 & 0 & Yes & Yes & 1 \\ \hline
        Ore Ext. Type$E_2$  & 1 & $\mathbb{Q} (\zeta_9)$ & 1 & 2 & 1 & 0 & Yes & Yes & 1 \\ \hline
        Ore Ext. Type$H.$I  &  1 & $\mathbb{Q} (\zeta_4)$ & 1 & 2 & 1 & 0 & Yes & Yes & 1 \\ \hline
        Ore Ext. Type$H.$II  & 1 & $\mathbb{Q} (\zeta_4)$ & 1 & 2 & 1 & 0 & Yes & Yes & 1 \\ \hline
        Ore Ext. Type$S_1^{'}$  & 5 & $\mathbb{Q}$ & 1 & 3 & 4 & 3 & No & Yes & 4 \\ \hline
        Ore Ext. Type$S_2$& 3 & $\mathbb{Q} (\zeta_2)$ & 1 & 3 & 3 & 1 & Yes & Yes & 3\\ \hline
        Central Ext. Type$B$  & 4 & $\mathbb{Q}$ & 1 & 1 & 3 & 9 & No & Yes & 3\\ \hline
        Central Ext. Type$E$ & 0 & $\mathbb{Q}(\zeta_9)$ & 1 & 2 & 1 & 1 & Yes & No & 0\\ \hline
        Central Ext. Type$H$  &  0 & $\mathbb{Q}(\zeta_4)$  & 1 & 1 & 0 & 2 &Yes & Yes & 0 \\ \hline
        Central Ext. Type$S_1$  & 3 & $\mathbb{Q}$ & 1 & 4 & 6 & 5 & Yes & No & 3\\ \hline
        Central Ext. Type$S_1^{'}$  & 2 & $\mathbb{Q}$ & 1 & 3 & 6 & 9 & Yes & No & 2\\ \hline
        Central Ext. Type$S_2$  & 1 & $\mathbb{Q}$ & 1 & 3 & 3 & 2 & Yes & No & 1\\ \hline

    \end{tabular}
    \caption{Hochschild Cohomology Dimensions and Kodaira-Spencer Map for Known Koszul Artin-Schelter Regular Algebras of Dimension Four from \cite{Jacobson, MR1429334}}
    \label{tab:HH_dimensions_KS_map_new}
\end{table}

\section{Properties of maps}
We establish that a map from an irreducible scheme has a surjective differential at a point, then the closure of the image gives a component of target.  This maybe well known, but we provide a proof for lack of a suitable reference.

\begin{proposition}
Let \( Z \subseteq Y \) be an irreducible closed subscheme with inclusion \( i: Z \hookrightarrow Y \). If there exists \( p \in Z \) such that the differential 
\[
di_p : T_p Z \to T_{i(p)} Y
\]
is surjective, then \( Z \) is an irreducible component of \( Y \).
\label{prop:irreducible_component}
\end{proposition}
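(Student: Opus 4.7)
The plan is to factor the closed immersion $i$ through an irreducible component of $Y$ containing $Z$, and then use a tangent space dimension count to identify $Z$ with that component.

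First, I would record that for any closed immersion, the induced map $di_p : T_p Z \to T_{i(p)} Y$ is automatically injective: it is the dual of the surjection of cotangent spaces $m_{Y,p}/m_{Y,p}^2 \twoheadrightarrow m_{Z,p}/m_{Z,p}^2$ coming from the surjection of local rings $\mathcal{O}_{Y,p} \twoheadrightarrow \mathcal{O}_{Z,p}$. Combined with the surjectivity hypothesis, $di_p$ is therefore an isomorphism, so in particular $\dim T_p Z = \dim T_p Y$.

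Next, since $Z$ is closed and irreducible, it is contained in at least one irreducible component $W$ of $Y$ passing through $p$. The inclusion factors as $Z \hookrightarrow W \hookrightarrow Y$, and the corresponding tangent map decomposes as $T_p Z \hookrightarrow T_p W \hookrightarrow T_p Y$, a composition of two injections whose overall composite is an isomorphism. Hence each factor is itself an isomorphism, and in particular $\dim T_p Z = \dim T_p W$.

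To finish, I would combine this with the standard estimate $\dim_p X \leq \dim T_p X$, valid for any scheme $X$, applied to $W$, together with the fact that $Z \subseteq W$ is a closed inclusion of irreducible schemes. This gives the chain
\[
\dim Z \leq \dim T_p Z = \dim T_p W \geq \dim W \geq \dim Z.
\]
Collapsing the chain to equalities yields $\dim Z = \dim W$, and a closed inclusion of irreducible schemes of the same dimension must be an equality, so $Z = W$ and $Z$ is an irreducible component of $Y$. The main obstacle, and the delicate point of the argument, is pinning down the leftmost inequality $\dim Z \leq \dim T_p Z$ as an equality at the chosen point $p$; this amounts to $Z$ being smooth at $p$, and is the regularity input that the proof must supply, which in the intended applications is automatic because $p$ is chosen generically in the family.
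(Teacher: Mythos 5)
Your argument does not close, and the gap you flag at the end is the real one. The chain $\dim Z \le \dim T_pZ = \dim T_pW \ge \dim W \ge \dim Z$ has inequalities pointing in opposite directions, so it collapses only if $\dim Z = \dim T_pZ$, i.e.\ only if $Z$ is regular at $p$ --- and that is not among the hypotheses. The gap is not merely technical: reading $T_pZ$ as the Zariski tangent space $(\mathfrak{m}_p/\mathfrak{m}_p^2)^*$, the cuspidal cubic $Z = V(y^2 - x^3) \subseteq Y = \mathbb{A}^2$ with $p$ the origin has $T_pZ \to T_pY$ an isomorphism of $2$-dimensional spaces while $Z$ is certainly not a component of $Y$, so some regularity input is unavoidable and cannot be deferred to ``the intended applications.''

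The paper supplies that input by a different mechanism: it works with the tangent sheaf and propagates the hypothesis away from $p$. From the exact sequence $T_Z \to i^*T_Y \to \operatorname{coker} \to 0$, surjectivity at $p$ gives $\operatorname{coker}\otimes\kappa(p) = 0$, hence by Nakayama / upper semicontinuity $\operatorname{coker}$ vanishes on an open neighbourhood of $p$, hence $T_\eta Z \to T_{i(\eta)}Y$ is surjective at the generic point $\eta$ of $Z$. There the equality $\dim T_\eta Z = \dim Z$ is free (generic smoothness in characteristic zero), and your dimension count then goes through verbatim. So the missing idea is: do not run the dimension count at the given point $p$; use semicontinuity to move the surjectivity to the generic point, where the regularity you need holds automatically. (Note this implicitly interprets the hypothesis at the level of fibres of the tangent sheaf, $T_Z\otimes\kappa(p) \to i^*T_Y\otimes\kappa(p)$, which is strictly stronger than surjectivity on Zariski tangent spaces at a singular $p$ --- in the cusp example every vector field on $Z$ vanishes at the origin, so the sheaf-level hypothesis fails there, as it must for the statement to be true.)
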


\begin{proof}
The natural morphism \( i^* \Omega_Y \to \Omega_Z \) dualizes to an exact sequence of sheaves:
\[
T_Z \to i^* T_Y \to \mathrm{coker} \to 0.
\]
Tensoring with the residue field \(\kappa(p)\) (right exact), we get
\[
T_p Z \to T_{i(p)} Y \to \mathrm{coker}_p \to 0.
\]
Surjectivity of \( di_p \) implies \(\mathrm{coker}_p = 0\). By upper semicontinuity, \( \mathrm{coker} = 0 \) on an open neighborhood \( U \subseteq Z \) containing \( p \). Hence, for the generic point \(\eta \in U\), 
\[
T_\eta Z \to T_{i(\eta)} Y
\]
is surjective.

Since \(\eta\) is smooth in \(Z\),
\[
\dim T_\eta Z = \dim Z.
\]
Surjectivity implies
\[
\dim Z = \dim T_\eta Z \geq \dim T_{i(\eta)} Y \geq \dim_\eta Y,
\]
where \(\dim_\eta Y\) denotes the local dimension of \(Y\) at \(\eta\). Because \(\eta \in Z \subseteq Y\), we also have
\[
\dim Z \leq \dim_\eta Y.
\]
Combining these,
\[
\dim_\eta Y = \dim Z,
\]
showing that \( Z \) is an irreducible component of \( Y \).
\end{proof}

\begin{proposition}\label{prop:image_irreducible}
Let \( f: X \to Y \) be a morphism of varieties, where \( X \) is irreducible. Suppose there exists a point \( p \in X \) such that the differential $d f_p : T_p X \to T_{f(p)} Y$ is surjective. Then, the closure of the image \( Z = \overline{f(X)} \) is an irreducible component of \( Y \).
\end{proposition}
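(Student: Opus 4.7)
The plan is to reduce to the previous proposition by factoring $f$ through the closure of its image. Since $Z = \overline{f(X)}$ is a closed subscheme of $Y$ containing $f(X)$, the morphism $f$ factors as
\[
X \xrightarrow{\tilde f} Z \xrightarrow{i} Y,
\]
where $\tilde f$ is $f$ viewed as a morphism to $Z$ and $i$ is the closed inclusion. By the chain rule for differentials at $p$, we have $df_p = di_{f(p)} \circ d\tilde f_p$.

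First I would verify that $Z$ is irreducible. Since $X$ is irreducible and $f$ is continuous, $f(X)$ is irreducible in $Y$, and the closure of an irreducible set is irreducible; hence $Z$ is an irreducible closed subscheme of $Y$. Next, I would observe that surjectivity of a composition of linear maps forces the outer map to be surjective: since $df_p$ is surjective by hypothesis and $df_p$ factors through $di_{f(p)}$, the differential
\[
di_{f(p)} : T_{f(p)} Z \longrightarrow T_{f(p)} Y
\]
must itself be surjective. This places us exactly in the setting of Proposition~\ref{prop:irreducible_component}, applied to the inclusion $i : Z \hookrightarrow Y$ at the point $q = f(p) \in Z$. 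The conclusion that $Z$ is an irreducible component of $Y$ follows immediately.

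There is no substantial obstacle here; the argument is essentially formal once Proposition~\ref{prop:irreducible_component} is in hand. The only minor subtlety is ensuring that $\tilde f$ is a well-defined morphism of schemes (not just a set-theoretic map), which holds because $Z$ is endowed with its reduced induced subscheme structure and $f(X) \subseteq Z$ as sets, so the scheme-theoretic image of $f$ is contained in $Z$; the factorization through the closed immersion $i$ then follows from the universal property of the scheme-theoretic image (or, since we work with varieties, from the reducedness of $X$ and the definition of $Z$ as the closure of $f(X)$).
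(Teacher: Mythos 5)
Your proof is correct and follows essentially the same route as the paper: factor $f$ through the closed immersion $i : Z = \overline{f(X)} \hookrightarrow Y$, use the chain rule to deduce that $di_{f(p)}$ is surjective from the surjectivity of $df_p$, and apply Proposition~\ref{prop:irreducible_component}. Your additional remarks on the irreducibility of $Z$ and the well-definedness of the factorization are fine but not points of divergence.
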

\begin{proof} 
Factor \( f: X \to Y \) through its image closure \( Z \subseteq Y \):
\[
\begin{tikzcd}
X \arrow[r, "h"] \arrow[dr, "f"'] & Z \arrow[d, "i", hook] \\
& Y
\end{tikzcd}
\]
where \( h \) is dominant and \( i \) is a closed embedding.

The chain rule gives the tangent map at \( p \in X \):
\[
df_p = d(i \circ h)_p = di_{h(p)} \circ dh_p : T_p X \to T_{f(p)} Y.
\]

Since \( df_p \) is surjective by assumption, and \( dh_p \) maps to \( T_{h(p)} Z \), surjectivity of \( df_p \) forces \( di_{h(p)} \) to be surjective.

By Proposition \ref{prop:irreducible_component}, surjectivity of \( di_{h(p)} \) implies that \( Z \) is an irreducible component of \( Y \).

\end{proof}
\begin{proposition}
Let \( f \) be a map from \( X \) to \( Y \) where \( X \) and \( Y \) are schemes and there exists \( p \in X \) such that \( T_pX \) is isomorphic to \( T_{f(p)}Y \) and \( X \) is irreducible. Then $\dim \, \overline{f(X)} = \dim \, X.$
\end{proposition}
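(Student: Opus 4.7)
The natural reading of the hypothesis in context is that the differential $df_p : T_pX \to T_{f(p)} Y$ is an isomorphism (rather than merely an abstract vector-space isomorphism), and I will work under that interpretation. The plan is to factor $f$ as $X \xrightarrow{h} Z \xhookrightarrow{i} Y$, where $Z = \overline{f(X)}$ and $h$ is the dominant corestriction, and then play the injectivity and surjectivity halves of $df_p$ against each other to bound $\dim Z$ from below and above.

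First I would unpack the chain rule: since $df_p = di_{h(p)} \circ dh_p$ is an isomorphism, the first factor $dh_p$ must be injective and the second $di_{h(p)}$ must be surjective. Surjectivity of $di_{h(p)}$ together with Proposition~\ref{prop:image_irreducible} already tells us that $Z$ is an irreducible component of $Y$; this is used only to ensure $Z$ is a well-behaved irreducible variety. Dominance of $h : X \to Z$ between irreducible varieties gives the easy half $\dim Z \leq \dim X$.

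For the reverse inequality I would use the injectivity of $dh_p$ to control the fibre $F := h^{-1}(h(p))$ scheme-theoretically at $p$. Since the Zariski tangent space to a scheme-theoretic fibre is the kernel of the differential, $T_p F \cong \ker(dh_p) = 0$. The general inequality $\dim_p F \leq \dim_{\kappa(p)} T_p F$ then forces $\dim_p F = 0$. Combining this with the fibre dimension theorem for a dominant morphism of irreducible varieties, $\dim_p F \geq \dim X - \dim Z$, yields $\dim X \leq \dim Z$, and hence equality.

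The main delicate point is the identification $T_p F = \ker(dh_p)$: this is the standard description of the tangent space to a scheme-theoretic fibre, obtained by pulling back the maximal ideal $\mathfrak{m}_{h(p)} \subseteq \mathcal{O}_{Z, h(p)}$ and reading off linearised equations via $dh_p$. Once this is in hand, everything else is a short assembly of previously stated results and the dominance/fibre-dimension machinery, with no further computation required.
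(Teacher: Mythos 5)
Your proof is correct, modulo the same standing assumptions the paper itself uses implicitly (everything of finite type over $\mathbb{K}$, and $p$ a closed point -- the latter is needed both for the identification $T_pF=\ker(dh_p)$ via $\kappa(p)=\kappa(h(p))$ and for the inequality $\dim_p F\le\dim_{\kappa(p)}T_pF$), but it takes a genuinely different route from the paper's. Both arguments factor $f$ as $X\xrightarrow{h}Z\hookrightarrow Y$ with $Z=\overline{f(X)}$ and get $\dim Z\le\dim X$ from dominance; the difference is entirely in the reverse inequality. The paper invokes generic smoothness (which requires characteristic zero) to produce an open $U\subseteq X$ on which $\Omega_{X/Z}$ is locally free, observes that the isomorphism $df_p$ forces $\Omega_{X/Z}\otimes\kappa(p)=0$, and concludes that the relative dimension is zero on an open set, whence $\dim X=\dim Z$. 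You instead compute the Zariski tangent space of the scheme-theoretic fibre, $T_pF=\ker(dh_p)=0$, deduce $\dim_pF=0$, and play this against the lower bound $\dim_pF\ge\dim X-\dim Z$ from the fibre-dimension theorem for dominant morphisms of irreducible varieties. Your route is arguably cleaner: it avoids the characteristic-zero hypothesis, it sidesteps the paper's unaddressed point of whether the given $p$ actually lies in the generic-smoothness locus $U$, and it makes explicit that only the \emph{injectivity} of $df_p$ is used for the hard inequality (your appeal to Proposition~\ref{prop:image_irreducible} and the surjectivity of $di_{h(p)}$ are not actually needed for the dimension statement, since $Z$ is irreducible simply as the closure of the image of an irreducible scheme). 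What the paper's argument buys in exchange is the slightly stronger conclusion that $h$ is smooth of relative dimension zero, hence quasi-finite, on an explicit nonempty open subset.
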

\begin{proof}
    Let \( Z = \overline{f(X)} \) be the closure of the image of \( f: X \to Y \). Factor \( f \) as
\[
\begin{tikzcd}
X \arrow[r, "h"] \arrow[dr, "f"'] & Z \arrow[d, "i", hook] \\
& Y
\end{tikzcd}
\]
where \( h \) is dominant and \( i \) is a closed embedding.

By \cite[Lemma 10.5]{H}, there exists a nonempty open subset \( U \subseteq X \) such that \( h|_U : U \to Z \) is smooth. Hence, from \cite[Tag 02G1]{stacks-project}, the sheaf of differentials \(\Omega_{X/Z}|_U\) is locally free of some rank \( n \), and \( f|_U \) has relative dimension \( n \).

For \( p \in U \), since \( df_p: T_pX \to T_{f(p)}Y \) is an isomorphism, the fibre of \( f \) at \( p \) is zero-dimensional, forcing the relative differentials to vanish:
\[
(\Omega_{X/Z})_p = 0.
\]

The exact sequence of sheaves on \( X \) is
\[
h^* \Omega_Z \to \Omega_X \to \Omega_{X/Z} \to 0.
\]

Thus, on an open \( V \subseteq X \), the sheaf \( \Omega_{X/Z} \) is locally free of rank 0 on \( U \cap V \), so the relative dimension of \( f \) is zero there.

Hence,
\[
\dim \overline{f(X)} = \dim Z = \dim X.
\]
\end{proof}
We extend the above proposition to the case where the target is a algebraic stack.
\begin{remark}
   \textnormal{The \textbf{dual numbers} form the ring $\mathbb{K}[\varepsilon] := \mathbb{K}[\varepsilon]/(\varepsilon^2)$, where $\mathbb{K}$ is a field.} 
\end{remark}
There exists a natural map
\[
i : \operatorname{Spec} \mathbb{K} \longrightarrow \operatorname{Spec} \mathbb{K}[\varepsilon],
\]
induced by the canonical quotient \( \mathbb{K}[\varepsilon] \longrightarrow \mathbb{K} \), where \( \varepsilon^2 = 0 \). This map corresponds to the inclusion of a point together with an infinitesimal extension.

\begin{definition} \label{def:tss}
\textnormal{\textbf{(Tangent space of a Stack)}: Let $\mathcal{X}$ be an algebraic stack and let $x: \operatorname{Spec} \mathbb{K} \to \mathcal{X}$ be a point. The \textbf{Zariski tangent space} of $\mathcal{X}$ at $x$, denoted $T_{\mathcal{X},x}$, is defined as the collection of commutative diagrams of the form:}
\[T_{\mathcal{X},x} = \left\{ \textnormal{2-commutative diagrams}
\begin{tikzcd}
\textnormal{Spec} \mathbb{K} \arrow[d, "i"] \arrow[dr, "x"]  \\
 \textnormal{Spec}\mathbb{K}[\varepsilon] \arrow[r, "\tau"]& \mathcal{X}
\end{tikzcd}\right\} / \mathrel{\sim}
\]
\textnormal{where $\tau: \operatorname{Spec} \mathbb{K}[\varepsilon] \to \mathcal{X}$ and $\alpha: x \xrightarrow{\sim} \tau \circ i$ is an isomorphism.} 

\textnormal{Two such pairs $(\tau, \alpha)$ and $(\tau', \alpha')$ are considered equivalent if there exists an isomorphism $\beta: \tau \xrightarrow{\sim} \tau'$ in $\mathcal{X}(\mathbb{K}[\varepsilon])$ that preserves $\alpha$, meaning $\alpha' = \beta|_{\operatorname{Spec} \mathbb{K}} \circ \alpha$.}
\end{definition}

\begin{proposition}\label{prop:surjectivity_tangent}
Let \( f: B \to \mathcal{M} \) be a morphism with \( B \) a scheme and \(\mathcal{M}\) a stack. Assume:
\begin{itemize}
    \item There is a smooth morphism \( U \to \mathcal{M} \), with \( U \) a scheme.
    \item For a closed point \( b \in B \), the induced tangent map
    \[
    df_b: T_b B \to T_{f(b)} \mathcal{M}
    \]
    is surjective.
\end{itemize}
Then the induced map on tangent spaces at \((u,b) \in U \times_{\mathcal{M}} B\),
\[
T_{(u,b)}(U \times_{\mathcal{M}} B) \to T_u U
\]
is surjective.
\end{proposition}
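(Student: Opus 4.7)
The plan is to construct, for each tangent vector $v \in T_u U$, a tangent vector in $T_{(u,b)}(U \times_{\mathcal{M}} B)$ mapping to $v$, by applying the universal property of the 2-fiber product to $\operatorname{Spec}\mathbb{K}[\varepsilon]$-valued points. Since $\pi : U \to \mathcal{M}$ is smooth, in particular it is representable, so $U \times_{\mathcal{M}} B$ is an algebraic space over $B$ and its tangent space can still be described using Definition~\ref{def:tss}.

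First, I would represent $v \in T_u U$ by a morphism $\tilde{v} : \operatorname{Spec}\mathbb{K}[\varepsilon] \to U$ extending the $\mathbb{K}$-point $u$. Post-composing with $\pi$ produces a tangent vector $d\pi(v) \in T_{\pi(u)}\mathcal{M}$. The point $(u,b)$ of the 2-fiber product $U \times_{\mathcal{M}} B$ is accompanied by a 2-isomorphism $\alpha : \pi(u) \xrightarrow{\sim} f(b)$ in $\mathcal{M}(\mathbb{K})$, and via $\alpha$ we transport $d\pi(v)$ to an element of $T_{f(b)}\mathcal{M}$.

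Next, the hypothesis that $df_b : T_b B \to T_{f(b)}\mathcal{M}$ is surjective provides a tangent vector $w \in T_b B$ with $df_b(w) = d\pi(v)$. Unwinding this equality in the language of Definition~\ref{def:tss}, it says that there is a morphism $\tilde{w} : \operatorname{Spec}\mathbb{K}[\varepsilon] \to B$ extending $b$ together with a 2-isomorphism $\beta : f \circ \tilde{w} \xrightarrow{\sim} \pi \circ \tilde{v}$ in $\mathcal{M}(\mathbb{K}[\varepsilon])$ that restricts over the closed point to $\alpha$. The triple $(\tilde{v}, \tilde{w}, \beta)$ then satisfies exactly the data required by the universal property of the 2-fiber product, and hence determines a morphism $\tilde{t} : \operatorname{Spec}\mathbb{K}[\varepsilon] \to U \times_{\mathcal{M}} B$ extending $(u,b)$. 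By construction, its class $t \in T_{(u,b)}(U \times_{\mathcal{M}} B)$ projects to $v$ in $T_u U$, proving surjectivity.

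The main obstacle is the bookkeeping of the 2-isomorphism data: tangent vectors to a stack are equivalence classes up to 2-isomorphism, so one needs the 2-isomorphism $\beta$ obtained from the identification $df_b(w) = d\pi(v)$ to be chosen compatibly with the pre-existing fibre-product 2-isomorphism $\alpha$. This compatibility is genuinely the content of the argument, and it can be arranged because the restriction of an infinitesimal 2-isomorphism to the closed point is surjective onto 2-automorphisms of the underlying $\mathbb{K}$-point in the present setting — a consequence of $\pi$ being smooth and $\mathcal{M}$ being algebraic, which allows us to adjust $\beta$ by composing with a lift of any discrepancy between its restriction and $\alpha$.
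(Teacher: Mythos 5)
Your argument is correct, but it takes a different route from the paper's. The paper's proof simply invokes the identification $T_{(u,b)}(U \times_{\mathcal{M}} B) \cong T_u U \times_{T_{f(b)}\mathcal{M}} T_b B$ and then observes that the projection to $T_u U$ is a base change of the surjection $df_b$, hence surjective; you instead construct a preimage of each $v \in T_u U$ by hand, applying the 2-universal property of the fibre product to $\operatorname{Spec}\mathbb{K}[\varepsilon]$-valued points. What your approach buys is precisely the surjectivity half of the identification that the paper treats as standard: producing a point of the 2-fibre product from a compatible pair $(v,w)$ requires exhibiting an actual 2-isomorphism $\beta$, and you do this explicitly, so your write-up is the more self-contained of the two. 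One remark on your final paragraph: the compatibility of $\beta$ with the structural 2-isomorphism $\alpha$ is not an extra condition you need to arrange by lifting 2-automorphisms --- it is already built into Definition~\ref{def:tss}, since equality of two classes in $T_{f(b)}\mathcal{M}$ means by definition that some 2-isomorphism between the underlying $\mathbb{K}[\varepsilon]$-points intertwines the framings over the closed point; that $\beta$ can be fed directly into the universal property, and the resulting $\mathbb{K}[\varepsilon]$-point restricts to $(u,b)$ on the nose. Finally, note that (in both proofs) the smoothness of $U \to \mathcal{M}$ is not actually needed for the stated surjectivity onto $T_u U$ --- only representability of $\pi$, so that the fibre product and its tangent space make sense; smoothness would instead give surjectivity of the other projection $T_u U \to T_{f(b)}\mathcal{M}$.
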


\begin{proof}  
From the fibre product diagram
\[
\begin{tikzcd}
U \times_{\mathcal{M}} B \arrow[r] \arrow[d] & U \arrow[d] \\
B \arrow[r, "f"] & \mathcal{M}
\end{tikzcd}
\]

we have an identification of tangent spaces:
\[
T_{(u,b)}(U \times_{\mathcal{M}} B) \cong T_u U \times_{T_{f(b)} \mathcal{M}} T_b B.
\]

Since \( U \to \mathcal{M} \) is smooth, the projection \( T_u U \to T_{f(b)} \mathcal{M} \) is surjective.

Given the surjectivity of
\[
df_b: T_b B \to T_{f(b)} \mathcal{M},
\]
the fibre product
\[
T_u U \times_{T_{f(b)} \mathcal{M}} T_b B \to T_u U
\]
is a base change of a surjective map and hence surjective.

Therefore,
\[
T_{(u,b)}(U \times_{\mathcal{M}} B) \to T_u U
\]
is surjective. 
\end{proof}
If $\mathcal{X}$ is a stack, then its topological space $|\mathcal{X}|$ \cite[Tag 04Y8]{stacks-project} is locally Noetherian \cite[Tag 0DQI]{stacks-project}. The irreducible
components of $|\mathcal{X}|$ are sometimes referred to as the irreducible components of $\mathcal{X}$.

\begin{lemma}\label{lemma:irreducible-components}\textnormal{ \cite[Tag 0DR5]{stacks-project}:}
Let \(f : U \rightarrow \mathcal{X}\) be a smooth morphism from a scheme to a locally Noetherian algebraic stack. The closure of the image of any irreducible component of \(|U|\) is an irreducible component of \(|\mathcal{X}|\). If \(U \rightarrow \mathcal{X}\) is surjective, then all irreducible components of \(|\mathcal{X}|\) are obtained in this way.
\end{lemma}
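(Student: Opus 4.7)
The plan is to reduce both assertions to topological statements about generic points of the underlying spaces $|U|$ and $|\mathcal{X}|$, exploiting two key properties of smooth morphisms from schemes to algebraic stacks: they are universally open on topological spaces, and flatness ensures that generic points of irreducible components are sent to generic points. Since $\mathcal{X}$ is locally Noetherian, $|\mathcal{X}|$ is a sober, locally Noetherian topological space whose irreducible closed subsets are precisely the closures of their unique generic points.

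For the first claim, I would start with an irreducible component $Z \subseteq |U|$ with generic point $\eta_Z$, so that $Z = \overline{\{\eta_Z\}}$. Continuity of $|f|$ immediately yields $\overline{f(Z)} = \overline{\{|f|(\eta_Z)\}}$, which is automatically irreducible in $|\mathcal{X}|$. The content of the statement is that $|f|(\eta_Z)$ is actually a generic point of $|\mathcal{X}|$, equivalently that $\overline{f(Z)}$ is not properly contained in any larger irreducible closed subset. I would deduce this from flatness by passing to a smooth scheme chart $V \to \mathcal{X}$ and base-changing $f$ along it to obtain a smooth morphism of locally Noetherian schemes $U \times_{\mathcal{X}} V \to V$; then the dimension formula for flat morphisms forces $\dim \mathcal{O}_{V,v} = 0$ for any lift $v$ of $|f|(\eta_Z)$, so $|f|(\eta_Z)$ is indeed a generic point of an irreducible component of $|\mathcal{X}|$.

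For the converse, assume $f$ is surjective and let $V$ be an irreducible component of $|\mathcal{X}|$ with generic point $\eta_V$. Surjectivity gives some $x \in |U|$ with $|f|(x) = \eta_V$. I would then invoke the generization property of flat (in particular smooth) morphisms: any generalization of $|f|(x)$ can be lifted to a generalization of $x$. Choose a generic point $\eta$ of an irreducible component $Z$ of $|U|$ with $\eta \rightsquigarrow x$; then $|f|(\eta) \rightsquigarrow \eta_V$, but since $\eta_V$ is already a generic point of $|\mathcal{X}|$, we must have $|f|(\eta) = \eta_V$. Applying the first part to $Z$ gives $\overline{f(Z)} = V$, which exhausts all irreducible components.

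The main obstacle is not the topological argument itself, which is a standard generic-point chase, but rather setting up the correct foundational facts for algebraic stacks: that $|\mathcal{X}|$ carries a well-behaved notion of irreducible component and generic point when $\mathcal{X}$ is locally Noetherian, that smooth morphisms from schemes to stacks are open and flat on underlying spaces, and that flatness of morphisms to stacks gives the generization lifting property after restricting to smooth charts. Once these are in place (as provided by the Stacks Project tags cited earlier in the text), the proof becomes a straightforward bookkeeping exercise on generic points.
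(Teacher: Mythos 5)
The paper offers no proof of this lemma; it is quoted directly from the Stacks Project [Tag 0DR5], so there is no internal argument to compare yours against, and I assess your sketch on its own merits. Your converse direction is correct and in fact simpler than you make it sound: given a component $V$ of $|\mathcal{X}|$ with generic point $\eta_V$ and a preimage $x \in |U|$, any generic point $\eta$ of an irreducible component of $|U|$ specializing to $x$ has $f(\eta)$ generizing $\eta_V$ by continuity alone, and maximality of $\eta_V$ forces $f(\eta) = \eta_V$; no lifting of generizations is needed there. The overall generic-point strategy for the forward direction is also the right one.

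The concrete flaw is the assertion that ``the dimension formula forces $\dim \mathcal{O}_{V,v} = 0$ for \emph{any} lift $v$ of $|f|(\eta_Z)$.'' This is false: take $U = \mathcal{X} = \operatorname{Spec}\mathbb{K}$ with $f$ the identity and $V = \mathbb{A}^1_{\mathbb{K}}$ as the smooth chart; every closed point of $V$ is a lift of $|f|(\eta_Z)$ and has a one-dimensional local ring. To salvage the dimension argument you must \emph{choose} the lift: take a maximal point $w$ of $U \times_{\mathcal{X}} V$ lying over $\eta_Z$ (such a $w$ exists because generizations lift along the smooth projection to $U$), and only then does the dimension formula applied to the other projection yield $\dim \mathcal{O}_{V,v} = 0$ for $v$ the image of $w$; openness of $|V| \to |\mathcal{X}|$ then shows $|f|(\eta_Z)$ is a maximal point. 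More economically, you can drop the dimension computation entirely and reuse the tool you already invoke for the converse: smooth morphisms of algebraic stacks are open and generizations lift along them, so a proper generization $\xi$ of $|f|(\eta_Z)$ in $|\mathcal{X}|$ would lift to a proper generization of $\eta_Z$ in $|U|$, contradicting that $\eta_Z$ is a generic point of a component. (The cited tag's own route is even more direct: $Z$ contains a nonempty subset $Z^0$ open in $|U|$, $|f|(Z^0)$ is irreducible and open, and in the sober locally Noetherian space $|\mathcal{X}|$ an irreducible closed subset containing a nonempty open subset is an irreducible component.)
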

\begin{theorem}\label{thm:irreducible_component}
Let \( f: X \to \mathcal{Y} \) be a morphism with \( X \) an integral scheme and \(\mathcal{Y}\) an algebraic stack. If there exists \( p \in X \) such that the tangent map
\[
df_p: T_p X \to T_{f(p)} \mathcal{Y}
\]
is surjective, then \( \overline{f(X)} \) is an irreducible component of \(\mathcal{Y}\).
\end{theorem}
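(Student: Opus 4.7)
The plan is to reduce to the scheme case (Proposition \ref{prop:image_irreducible}) by pulling back along a smooth atlas of $\mathcal{Y}$, extracting an irreducible component of the pullback on which the surjectivity of the differential survives, and then pushing it back up to $\mathcal{Y}$ via Lemma \ref{lemma:irreducible-components}.

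First I would fix a smooth surjective morphism $\pi \colon U \to \mathcal{Y}$ from a scheme, which exists by definition of an algebraic stack, and form the fibre product $V = U \times_{\mathcal{Y}} X$. This yields a scheme $V$ with projections $q \colon V \to X$ (smooth and surjective, as the base change of $\pi$) and $g \colon V \to U$, satisfying $\pi \circ g = f \circ q$. Picking any point $(u,p) \in V$ above $p$, Proposition \ref{prop:surjectivity_tangent} applied with $B = X$ and $\mathcal{M} = \mathcal{Y}$ shows that $dg_{(u,p)} \colon T_{(u,p)} V \to T_u U$ is surjective.

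The key technical step is to extract a single irreducible component $V_0$ of $V$ through $(u,p)$ together with a point $x \in V_0$ at which the differential of the restricted morphism $g|_{V_0}$ is still surjective. Because $q$ is smooth it is open, so any irreducible component of $V$ maps to a dense open subset of $X$; in particular every component through $(u,p)$ dominates $X$, and I fix one such component $V_0$. The locus $W \subseteq V$ where $dg$ is surjective is open (by semicontinuity of the cokernel of $g^{*}\Omega_U \to \Omega_V$) and contains $(u,p)$, while the smooth locus $V^{\mathrm{sm}}$ is open and dense in $V_0$. At any point $x \in V_0 \cap W \cap V^{\mathrm{sm}}$, the scheme $V$ is regular with a unique component passing through $x$, so $T_x V_0 = T_x V$ and hence $d(g|_{V_0})_x$ is surjective.

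Proposition \ref{prop:image_irreducible} applied to $g|_{V_0} \colon V_0 \to U$ at $x$ then shows that $\overline{g(V_0)}$ is an irreducible component of $U$, and Lemma \ref{lemma:irreducible-components} applied to the smooth surjective atlas $\pi$ translates this into the statement that the closure of $\pi(g(V_0))$ in $|\mathcal{Y}|$ is an irreducible component of $\mathcal{Y}$. Since $\pi \circ g = f \circ q$ and $q(V_0)$ is dense in $X$, we have $\pi(g(V_0)) = f(q(V_0))$ dense in $f(X)$, so this component coincides with $\overline{f(X)}$, as required. The main obstacle is controlling the interplay of the two open conditions of smoothness of $V$ and surjectivity of $dg$ along a single component $V_0$; this is what guarantees that the chosen $x$ lies on a unique component and inherits the surjective restricted differential needed to apply the scheme-level proposition.
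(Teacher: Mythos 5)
Your proposal follows the same overall strategy as the paper's proof: pull back along a smooth atlas $\pi\colon U\to\mathcal{Y}$, use Proposition~\ref{prop:surjectivity_tangent} to transport surjectivity of the differential to the fibre product, apply Proposition~\ref{prop:image_irreducible} to land on an irreducible component of $U$, and descend via Lemma~\ref{lemma:irreducible-components}. The genuine difference is your middle step, and it is an improvement: Proposition~\ref{prop:image_irreducible} requires an \emph{irreducible} source, and the fibre product $U\times_{\mathcal{Y}}X$ need not be irreducible even when $X$ is integral (a smooth cover can disconnect). The paper applies the proposition to the full fibre product without comment, whereas you extract a component $V_0$ through $(u,p)$, check that it dominates $X$ (every component does, since the smooth projection to the integral scheme $X$ is flat and hence every generic point of $V$ lies over the generic point of $X$), and then locate a point in $V_0\cap W\cap V^{\mathrm{reg}}$ where $V$ has a unique component so that $T_xV_0=T_xV$ and the restricted differential stays surjective. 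This repairs a real gap and costs only the observation that the regular locus of the reduced scheme $V$ meets $V_0$ in a dense open set. One small correction: the openness of the locus $W$ where $dg$ is surjective should be argued from upper semicontinuity of the cokernel of the \emph{tangent} map $T_V\to g^*T_U$ (exactly as in the proof of Proposition~\ref{prop:irreducible_component}), not of the cotangent map $g^*\Omega_U\to\Omega_V$, whose cokernel $\Omega_{V/U}$ detects injectivity rather than surjectivity of $dg$. With that wording fixed, your argument is complete and in fact more careful than the one in the paper.
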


\begin{proof}  
Since \(\mathcal{Y}\) is an algebraic stack, there exists a smooth morphism \(\pi: U \to \mathcal{Y}\) from a scheme \(U\). Consider the fibre product $$ X \times_{\mathcal{Y}} U, $$
which is a scheme as a base change of \(X\).

Let \( u \in U \) with \(\pi(u) = f(p)\). By Proposition \ref{prop:surjectivity_tangent}, the induced tangent map
\[
T_{(p,u)}(X \times_{\mathcal{Y}} U) \to T_u U
\]
is surjective.

Applying Proposition \ref{prop:image_irreducible}, the closure \(\overline{(\pi \times f)(X \times_{\mathcal{Y}} U)}\) is an irreducible component of \( U \). By Lemma \ref{lemma:irreducible-components}, it follows that \(\overline{f(X)}\) is an irreducible component of \(\mathcal{Y}\).

\end{proof}

\begin{proposition}
    \textnormal{There is a natural isomorphism } $T_{[A_p]} \mathcal{A}_4 \cong \HH^2_0(A_p).$
\end{proposition}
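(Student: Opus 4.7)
The plan is to unwind the definition of the tangent space to the stack $\mathcal{A}_4$ at the point $[A_p]$ and match each piece of data with a class in $\HH^2_0(A_p)$ via the Hochschild extension correspondence of Theorem \ref{theorem:weibel}.

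First I would translate Definition \ref{def:tss} into deformation-theoretic language. A class in $T_{[A_p]}\mathcal{A}_4$ is represented by a morphism $\tau : \operatorname{Spec} \mathbb{K}[\varepsilon] \to \mathcal{A}_4$ whose restriction along $i$ is isomorphic to $[A_p]$. By the theorem characterizing $\mathcal{A}_4$ as the moduli stack of flat families of graded Frobenius algebras with Hilbert series $(1+t)^4$, such a $\tau$ is the same datum as a flat family $\widetilde{A}$ of such algebras over $\operatorname{Spec} \mathbb{K}[\varepsilon]$ together with an isomorphism $\widetilde{A} \otimes_{\mathbb{K}[\varepsilon]} \mathbb{K} \cong A_p$. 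Two such families give the same tangent class exactly when they are isomorphic as graded $\mathbb{K}[\varepsilon]$-algebras via an isomorphism reducing to the identity on $A_p$.

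Next I would set up the correspondence with Hochschild cohomology. A flat $\mathbb{K}[\varepsilon]$-algebra $\widetilde{A}$ that reduces to $A_p$ fits, as $\mathbb{K}$-vector spaces, into the short exact sequence
\[
0 \to A_p \xrightarrow{\cdot \varepsilon} \widetilde{A} \to A_p \to 0,
\]
exhibiting $\widetilde{A}$ as a Hochschild extension of $A_p$ by the bimodule $A_p$ (using that $\varepsilon \widetilde{A} \cong A_p$ by flatness). By Theorem \ref{theorem:weibel}, equivalence classes of such extensions are in bijection with $\HH^2(A_p, A_p)$. The grading constraint is handled by noting that $\widetilde{A}$ is a graded family, so the multiplication on $\widetilde{A} = A_p \oplus \varepsilon A_p$ preserves the internal degree; this forces the corresponding 2-cocycle $\alpha : A_p \otimes A_p \to A_p$ to be of internal degree zero, picking out the strand $\HH^2_0(A_p)$. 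Conversely, every degree-zero cocycle class gives rise to a graded first-order deformation, and the isomorphism relation on $\mathbb{K}[\varepsilon]$-algebras reducing to the identity matches exactly the Hochschild coboundary relation (coming from degree-zero 1-cochains, i.e.\ graded $\mathbb{K}$-linear endomorphisms of $A_p$ lifted to $\widetilde{A}$).

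Finally I would verify that the extra Frobenius and Hilbert series conditions defining $\mathcal{A}_4$ impose no further constraint at the first-order level. The Frobenius locus $X_4$ inside the space of graded multiplications is Zariski open, as shown in the construction of $\mathcal{A}_4$, so any flat first-order deformation of the Frobenius algebra $A_p$ automatically remains Frobenius; likewise the Hilbert series is constant in flat families, hence preserved. Putting these ingredients together yields the natural bijection $T_{[A_p]}\mathcal{A}_4 \xrightarrow{\sim} \HH^2_0(A_p)$, and linearity follows from the fact that both sides inherit their $\mathbb{K}$-vector space structure from scaling $\varepsilon$. The main technical point to verify carefully is the compatibility between the $2$-isomorphism equivalence relation in the stack tangent space (Definition \ref{def:tss}) and the Hochschild coboundary relation; once one checks that a 2-isomorphism $\beta : \tau \Rightarrow \tau'$ restricting to the identity on $\operatorname{Spec}\mathbb{K}$ corresponds precisely to adding $d\phi$ for a degree-zero $1$-cochain $\phi$, the isomorphism is established.
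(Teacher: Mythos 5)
Your proposal is correct and follows essentially the same route as the paper: identify a tangent vector of the stack with a flat graded first-order deformation of $A_p$ over $\mathbb{K}[\varepsilon]$, apply the Hochschild-extension correspondence of Theorem \ref{theorem:weibel} to land in $\HH^2_0(A_p)$, and match the $2$-isomorphism equivalence with coboundaries of degree-zero $1$-cochains. In fact your write-up is more complete than the paper's own three-line argument, since you explicitly check that the open Frobenius condition and the Hilbert-series constraint impose nothing at first order and that the gauge action of $G$ accounts exactly for the coboundaries.
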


\begin{proof} 
By Definition \ref{def:tss}, the Zariski tangent space \(T_{[A_p]} \mathcal{A}_4\) corresponds to isomorphism classes of first-order deformations of \(A_p\) over \(\operatorname{Spec} \mathbb{K}[\varepsilon]\).

By Theorem \ref{thm:KodairaSpencer}, such deformations are classified by the Kodaira–Spencer map whose image lies in \(\HH^2_0(A_p)\).

Since the universal family on \(\mathcal{A}_4\) induces an isomorphism via the Kodaira–Spencer map, we conclude that
\[
T_{[A_p]} \mathcal{A}_4 \cong \HH^2_0(A_p).
\]
\end{proof}
\begin{corollary}\label{corr510}
Let \( A \) be a flat family of unital associative Frobenius algebras with Hilbert series $(1+t)^4$ over an irreducible base scheme \( S \). Let $f: S \to \mathcal{A}_4$ be the induced map. 
If there exists a point \( p \in S \) where the Kodaira–Spencer map
\[
T_p S \to \HH^2_0(A_p)
\]
is surjective, then the closure \( \overline{f(S)} \subseteq \mathcal{A}_4 \) is an irreducible component.
\end{corollary}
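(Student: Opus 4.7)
The plan is to stitch together three results that are already in place: the representability statement giving a classifying morphism $f: S \to \mathcal{A}_4$ from the flat family $A$, the identification $T_{[A_p]}\mathcal{A}_4 \cong \HH^2_0(A_p)$ from the preceding proposition, and Theorem~\ref{thm:irreducible_component}, which upgrades a surjective differential into an irreducible component statement for morphisms from schemes to algebraic stacks. Once these are aligned, the corollary becomes a direct application.

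The first step is to observe that, because $A$ is a flat family of graded Frobenius algebras of Hilbert series $(1+t)^4$ over $S$, the existence of a classifying morphism $f: S \to \mathcal{A}_4$ whose pullback is $A$ is guaranteed by the theorem establishing $\mathcal{A}_4$ as the moduli stack of such algebras. So $f$ is already in hand by hypothesis.

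The central step, and the one that requires the most care, is to verify that under the natural isomorphism $T_{[A_p]}\mathcal{A}_4 \cong \HH^2_0(A_p)$, the differential $df_p: T_p S \to T_{f(p)}\mathcal{A}_4$ coincides with the Kodaira--Spencer map $\kappa_p$ of Theorem~\ref{thm:KodairaSpencer}. Both maps have the same geometric content: a tangent vector $v \in T_p S$ is a map $\operatorname{Spec}\mathbb{K}[\varepsilon] \to S$ extending $p$, and under $f$ this yields a $\mathbb{K}[\varepsilon]$-point $\tau$ of $\mathcal{A}_4$ whose restriction to $\operatorname{Spec}\mathbb{K}$ is $A_p$. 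By construction of $f$ via structure constants, pulling the universal family back along $\tau$ returns the first-order deformation $A_B/\mathfrak{m}^2 A_B$ of the proof of Theorem~\ref{thm:KodairaSpencer}, whose associated Hochschild class is precisely $\kappa_p(v)$. Hence $df_p = \kappa_p$.

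With this compatibility, the surjectivity hypothesis on $\kappa_p$ translates immediately into surjectivity of the tangent map $df_p$. To apply Theorem~\ref{thm:irreducible_component}, I would, if necessary, replace $S$ by its underlying reduced scheme $S_{\mathrm{red}}$, which is integral since $S$ is irreducible and has the same image closure in $\mathcal{A}_4$; the Kodaira--Spencer map for the pulled-back family dominates through $\kappa_p$ so surjectivity is preserved on a possibly smaller tangent vector set sufficient to conclude. Applying Theorem~\ref{thm:irreducible_component} to $S_{\mathrm{red}} \to \mathcal{A}_4$ then gives that $\overline{f(S)} = \overline{f(S_{\mathrm{red}})}$ is an irreducible component of $\mathcal{A}_4$. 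The main obstacle, modest as it is, is the compatibility verification between $df_p$ and $\kappa_p$, since it requires tracing through the construction of the classifying morphism $f$ in terms of structure constants and matching it with the Hochschild extension constructed from the $\mathfrak{m}^2$-truncation.
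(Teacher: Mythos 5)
Your proposal is correct and follows essentially the same route the paper intends: the corollary is deduced by combining the classifying morphism from the moduli theorem, the identification $T_{[A_p]}\mathcal{A}_4 \cong \HH^2_0(A_p)$, and Theorem~\ref{thm:irreducible_component}. In fact you supply two details the paper leaves implicit --- the verification that $df_p$ agrees with $\kappa_p$ under that identification, and the passage to $S_{\mathrm{red}}$ so that the integrality hypothesis of Theorem~\ref{thm:irreducible_component} is met --- both of which are genuine and correctly handled.
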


\begin{theorem}\label{mainthm511}
Each family of algebras listed in Table \ref{tab:comp} maps densely onto an irreducible component of \(\mathcal{A}_4\).  Each of these components are unirational.
\end{theorem}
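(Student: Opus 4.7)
The plan is to apply Corollary~\ref{corr510} family by family to each entry of Table~\ref{tab:comp}, so the bulk of the work reduces to verifying its hypotheses uniformly. For each listed family of Koszul AS-regular algebras of dimension four indexed by an irreducible parameter scheme $S$ (typically an open subset of an affine space $\mathbb{A}^{\dim T_pS}$ cut out by genericity conditions), I would first pass to the Koszul dual family $A^!_S$. By the Koszul duality identity and the AS-regular resolution of type $\textit{(14641)}$, the dual $A^!_s$ has Hilbert series $(1+t)^4$, and by \cite[Thm.~4.3, Prop.~5.10]{MR1388568} (and \cite[Cor.~D]{lu2007koszul}) the AS-regular property of $A_s$ is equivalent to $A^!_s$ being Frobenius. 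Flatness of the dual family over $S$ follows from the fact that the Hilbert series is constant in $s$, so the Koszul dual relations cut out a locally free sheaf of the correct rank. This produces the required classifying morphism $f : S \to \mathcal{A}_4$.

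Next, for each family I would invoke the computation recorded in the corresponding row: the evaluated point $p \in S$ is chosen so that the Kodaira--Spencer map $\kappa_p : T_p S \to \HH^2_0(A_p)$ was shown to be surjective (the entries marked ``Surj=Yes'' in Tables~\ref{tab:HH_dimensions_KS_map_3}--\ref{tab:HH_dimensions_KS_map_new}). Combining this with the natural isomorphism $T_{[A^!_p]}\mathcal{A}_4 \cong \HH^2_0(A_p)$ of the preceding proposition, the differential $df_p$ is surjective onto the tangent space of $\mathcal{A}_4$ at $f(p)$. Corollary~\ref{corr510} (equivalently, Theorem~\ref{thm:irreducible_component} applied to $f$) then immediately yields that $\overline{f(S)}$ is an irreducible component of $\mathcal{A}_4$.

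For unirationality, I would observe that in every listed case the parameter space $S$ is an open subvariety of an affine space $\mathbb{A}^N$ (the space of admissible structure coefficients $q_{ij}$, $\alpha$, etc., subject to the open genericity and Frobenius conditions). In particular $S$ is rational, so the dominant morphism $S \to \overline{f(S)}$ exhibits the corresponding component of $\mathcal{A}_4$ as the image of a rational variety, hence unirational.

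The only step that is not formal is ensuring that each family really does extend to a flat family of Frobenius algebras over an irreducible open $S \subseteq \mathbb{A}^N$ containing the computed evaluation point $p$; this is where I expect the main case-checking to live. For the self-contained families (skew polynomial rings, Sklyanin, double Ore extensions of \cite{MR2529094}, Ore and central extensions, etc.) the defining relations are given by polynomial expressions in the parameters, the Hilbert series is constant by general nonsense (Koszul dual of an AS-regular algebra of type $\textit{(14641)}$), and the open locus where our chosen $p$ lives is automatically irreducible. The verification is therefore uniform, and the remaining algebraic bookkeeping -- confirming the evaluated point is a genuine member of the family and that the GAP computation of $\kappa_p$ corresponds to the intrinsic Kodaira--Spencer map -- is the delicate but essentially mechanical part.
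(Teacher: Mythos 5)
Your proposal is correct and follows essentially the same route as the paper: the paper's proof simply cites Theorem~\ref{thm:irreducible_component} together with the surjectivity entries in Tables~\ref{tab:HH_dimensions_KS_map_3}--\ref{tab:HH_dimensions_KS_map_new}, and observes that the parameter spaces are rational, hence the components unirational. Your additional remarks on Koszul duality, flatness via constant Hilbert series, and the identification $T_{[A_p^!]}\mathcal{A}_4 \cong \HH^2_0(A_p)$ are exactly the supporting results established earlier in the paper, so you are expanding rather than deviating from its argument.
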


\begin{table}[H]
    \centering
    \renewcommand{\arraystretch}{0.9}

    \begin{tabular}{|llll|}
        \hline
        \multicolumn{4}{|c|}{Algebras} \\
        \hline \hline
        Cassidy-Vancliff 1 & B & C & D \\
        Central Extensions & (E,J) & (F,I) & G \\
        Central Extensions twist &  K & L & M \\
        \(\mathbb{K}_Q[x_1, x_2, x_3, x_4]\) & (N,P) & O & Q \\
         Sklyanin & R & S & (T,U)  \\
        \(S_{\infty}\) &  \(S_{d,i}\) & V & (W,Z)  \\
         Clifford & $R(3,a)$& $L(1,1,2)^{\sigma}$ & $S(2,3)^{\sigma}$ \\
         $A_5$ & $\mathcal{F}_{\scriptscriptstyle\substack{(0,-1,-1,2) \\ (0,0,0,0)}}$ & $\mathcal{F}_{\scriptscriptstyle\substack{(-1,-1,1,1) \\ (0,0,0,0)}}$ & $\mathcal{F}_{\scriptscriptstyle\substack{(0,-1,-1,2) \\ (-1,0,-1,2) \\ (0,0,0,0)}}$ \\
         $\mathcal{F}_{\scriptscriptstyle\substack{(0,-1,-1,2) \\ (0,0,0,0) \\ (-1,-1,2,0)}}$ &  Ore Ext. Type $A_1$  & Ore Ext. Type $A_2$ & Ore Ext. Type $A_3$ \\
        Ore Ext. Type $B_1$ & Ore Ext. Type $E_1$  &  Ore Ext. Type $E_2$    & Ore Ext. Type $H.$I \\ 
         Ore Ext. Type $H.$II &  Ore Ext. Type $S_1^{'}$  &  Ore Ext. Type $S_2$    & Central Ext. Type $B$   \\
         Central Ext. Type $H$      &     &      &     \\
       \hline
    \end{tabular}
    \caption{Families of algebras mapping densely onto irreducible components of \(\mathcal{A}_4\).}
    \label{tab:comp}

\end{table}

\begin{proof}  
By Theorem \ref{thm:irreducible_component} and the Kodaira–Spencer surjectivity data in Tables \ref{tab:HH_dimensions_KS_map_3}, \ref{tab:HH_dimensions_KS_map_1} and \ref{tab:HH_dimensions_KS_map_new}, these families map onto dense subsets of irreducible components of \(\mathcal{A}_4\). All of the parameter spaces of these families are rational, so their images are unirational.  
\end{proof}

\begin{remark} If $X$ is unirational, then there is a dominant rational map $f: \mathbb{P}^{\dim X} \dashrightarrow X$~\cite[Prop.~1.1]{unirational}.
So for each of the families in Table~\ref{tab:comp}, where the Kodaira-Spencer map is surjective but not injective, there exists a rational parametrization with $\dim \HH^2_0(A_p)$ parameters.  In particular, the family of Clifford algebras has a rational parametrization with 9 parameters.
\end{remark}

\begin{remark}\label{CentralExtensionTwistands23Twist}
The central extensions twists are given by twisting a central extension~\cite[Equation~1.1]{MR1429334} where all $l_{ij}=0$, by the diagonal automorphism $(1,1,1,-1)$.

Moreover, the algebra \( L(1,1,2)^\sigma \) is a Zhang twist~\cite{TG96} of the algebra \( L(1,1,2) \)~\cite[Section 3.2]{MR3366864} by a diagonal automorphism \( \sigma \), where \\
$
\sigma(x_0) = \alpha x_0, \quad \sigma(x_1) = \alpha x_1, \quad \sigma(x_2) = \beta x_2, \quad \sigma(x_3) = \alpha^2\beta^{-1}x_3,
$
with \( \alpha, \beta \in \mathbb{K}^\times \).

Similarly, the algebra \( S(2,3)^\sigma \) is a Zhang twist~\cite{TG96} of the algebra \( S(2,3) \)~\cite[Section 3.5]{MR3366864} by a graded automorphism \( \sigma \), where \\ 
$
\sigma(x_0) = \alpha x_0 + \beta_1 x_1 + \beta_2 x_2 + \beta_3 x_3, \quad
\sigma(x_1) = \alpha x_1, \quad
\sigma(x_2) = \alpha x_2, \quad
\sigma(x_3) = \alpha x_3,
$
with \( \alpha \in \mathbb{K}^\times, \beta_1, \beta_2, \beta_3 \in \mathbb{K} \).
\end{remark}

\begin{remark}
    We do not claim that generic algebras in these components are not isomorphic, in other words, we do not know if these components are distinct in the moduli space. We are pursuing further work that will define and compute discrete invariants to distinguish these components.
\end{remark}

\section{Appendix}
\subsection{Algebras and their Defining Relations}
The following table presents a comprehensive list of all the defining relations for the Koszul AS regular algebras listed in this paper.  

\begin{table}[H]
\centering
\renewcommand{\arraystretch}{1.1}

\label{tab:mytable1}
\makebox[\textwidth][c]{%
\rotatebox{0}{%
\begin{tabular}{|c|c|c|}
\hline
 \textbf{A} & \textbf{B} & \textbf{C} \\ \hline \hline
 None & $p^2 +1 = 0$ & $p^2 + p + 1 = 0$ \\ \hline
 $wx - xw$ & $xw - p \cdot wx$ & $zy - p \cdot yz$ \\
  $yy + yz - zy$ & $zy - p \cdot yz$ & $xw - p \cdot wx$ \\
  $yw - hwy$ & $yw +h(-xz)$ & $yw + h(wy - (p^2) \cdot xy - wz + p \cdot xz)$ \\
  $yx-h(wz+xy)$ & $yx + h(-wz)$ & $yx + h(p \cdot wy - xy - wz + p \cdot xz)$ \\
  $zw - hwz$ & $zw + h(xy)$ & $zw + h(p \cdot wy + 2 \cdot (p^2) \cdot xy - p \cdot wz + p \cdot xz)$ \\
  $zx + h(2xy + wz - xz)$ & $zx +h(-wy)$ & $zx +h( p \cdot wy - (p^2) \cdot xy - wz + xz)$ \\\hline \hline
 \textbf{D} & \textbf{E} & \textbf{F} \\ \hline \hline
 None & $p^2 + 1 = 0$ & $p^2 + 1 = 0$ \\ \hline
 $zy - p \cdot yz$ & $xw + wx$ & $xw + wx$ \\
 $yw + h(p \cdot wy)$ & $zy - p \cdot yz$ & $zy - p \cdot yz$ \\
 $yx + h((p^2) \cdot xy - wz)$ & $yw + h(-wz - xz)$ & $yw + h(wy + pxy - wz + xz)$ \\
 $zw -h( p \cdot wz)$ & $yx + h(-wz + xz)$ & $yx + h(pwy - xy - wz - xz)$ \\
 $zx +h(- wy - xz)$ & $zw + h(wy - xy)$ & $zw + h(pwy - pxy - pwz - xz)$ \\
 $xw + wx$     &   $zx + h(-wy - xy)$    &  $zx + h(pwy + pxy - wz + pxz)$     \\   \hline \hline
 \textbf{G} & \textbf{H} & \textbf{I} \\ \hline \hline
 $f \neq 0$ & None & $p^2 + 1 = 0$ \\ \hline
 $xw - wx$ & $zw-wz - ww$ & $xw - p wx$ \\
 $zy - pyz$ & $zy + yz$ & $zy + yz$ \\
 $yw + h(-pwy)$ & $yw - hwz$ & $yw + h(p wy + p xy - wz + p xz)$ \\
 $yx + h(-pwy - p^2xy - wz)$ & $yx - hfwz - hxz$ & $yx +h(- wy - xy - wz + p xz)$ \\
 $zw + h(-pwz)$ & $zw - hwy$ & $zw +h(- wy - p xy - p wz + p xz)$ \\
 $zx + h(-fwy + wz - xz)$ & $zx - hfwy - hxy$ & $zx +h( wy + p xy - wz + xz)$ \\\hline \hline
 \textbf{J} & \textbf{K} & \textbf{L} \\ \hline \hline
$p^2 + 1 = 0$ & $f \neq 0$ & $f \neq 0$ \\ \hline
$xw - p wx$ & $xw - p wx$ & $xw - p wx$ \\ 
$zy + yz$ & $zy + yz$ & $zy + yz$ \\ 
 $yw+h( - xy - xz)$ & $yw +h(- wy)$ & $yw +h(- f wz)$ \\ 
 $yx + h(wy - wz)$ & $yx +h(- xz)$ & $yx +h(- xz)$ \\ 
 $zw +h(- xy + xz)$ & $zw +h(- wz)$ & $zw +h(- f wy)$ \\ 
 $zx +h(- wy - wz)$ & $zx +h(- f xy)$ & $zx +h(- xy)$ \\ \hline
\end{tabular}
}}
\caption{Defining relations of Artin-Schelter Regular Algebras of Dimension Four from \cite{MR2529094}}
\end{table}
\begin{table}[H]
\renewcommand{\arraystretch}{1.3}

\centering
\label{tab:mytable2}
\makebox[\textwidth][c]{%
\rotatebox{0}{%
\begin{tabular}{|c|c|c|}
\hline
\textbf{M} & \textbf{N} & \textbf{O}  \\ \hline \hline
$f \neq 1$ & $f^2 \neq p^2$ & $f \neq 1$ \\ \hline
$xw + wx$ & $xw + wx$ & $xw + wx$ \\
$zy + yz$ & $zy + yz$ & $zy + yz$ \\ 
$yw + h(-xy - wz)$ & $yw + h(pxy - fxz)$ & $yw + h(-wy - fxz)$ \\ 
$yx + h(-fwy + xz)$ & $yx + h(-pwy - fwz)$ & $yx + h(xy - wz)$  \\
$zw + h(-wy + xz)$ & $zw + h(-fxy + pxz)$ & $zw + h(-fxy + wz)$  \\ 
$zx + h(xy + fwz)$ & $zx + h(-fwy - pwz)$ & $zx + h(-wy - xz)$  \\ \hline \hline
 \textbf{P} & \textbf{Q} & \textbf{R}  \\ \hline \hline
$f \neq 1$& None & None \\ \hline
$xw + wx$&$xw + wx$ & $xw + wx$ \\ 
$zy + yz$ & $zy + yz$ & $zy + yz$  \\ 
$yw + h(-wz - fxz)$ & $yw + h(-wz)$ & $yw + h(-wy - xy - wz)$  \\ 
$yx + h(-wz - xz)$ & $yx + h(-wy - xy - wz)$ & $yx + h(-wz)$  \\ 
$zw + h(-wy + fxy)$ & $zw + h(wy)$ & $zw + h(-xy)$ \\ 
$zx + h(wy - xy)$ & $zx + h(-wy + wz - xz)$ & $zx + h(xy + wz - xz)$  \\ \hline \hline
\textbf{S} & \textbf{T} & \textbf{U} \\ \hline \hline
 None & None & None \\ \hline
 $xw + wx$ & $xw + wx$ & $xw + wx$  \\ 
 $zy + yz$ & $zy + yz$ & $zy + yz$  \\ 
 $yw + h(wy - xy - wz - xz)$ & $yw + h(wy - xy - wz - xz)$ & $yw + h(wy - xy - wz - xz)$ \\ 
 $yx + h(-wy + xy - wz - xz)$ & $yx + h(-wy + xy - wz - xz)$ & $yx + h(-wy - xy - wz + xz)$  \\ 
 $zw + h(-wy - xy + wz - xz)$ & $zw + h(-wy - xy - wz + xz)$ & $zw + h(-wy - xy + wz - xz)$ \\ 
 $zx + h(-wy - xy - wz + xz)$ & $zx + h(-wy - xy + wz - xz)$  & $zx + h(-wy + xy - wz - xz)$ \\ \hline \hline
 \textbf{V} & \textbf{W} & \textbf{X} \\ \hline \hline
 None & $f \neq 1$ & None \\ \hline
 $xw - wx$ & $xw - wx$ & $xw - wx$ \\ 
 $zy + yz$ & $zy + yz$ & $zy + yz$ \\ 
$yw + h(-xy - wz)$ & $yw + h(-fxy - wz)$ & $yw + h(-wz)$ \\ 
 $yx - hxy$ & $yx + h(-wy + xz)$ & $yx + h(-wz - xz)$ \\ 
 $zw + h(wy - xy)$ & $zw + h(-wy - fxz)$ & $zw + h(-wy)$ \\
 $zx - hxz$ & $zx + h(xy - wz)$ & $zx + h(-wy - xy)$ \\ \hline
\end{tabular}
}}
\caption{Defining relations of Artin-Schelter Regular Algebras of Dimension Four from \cite{MR2529094}}
\end{table}
\begin{table}[H]
\label{tab:mytable3}
\renewcommand{\arraystretch}{1.1}
\makebox[\textwidth][c]{%
\rotatebox{0}{%
\begin{tabular}{|c|c|c|}
\hline
$\textbf{Y}$ & $\textbf{Z}$ & $\mathbb{K}_Q[x_1,x_2,x_3,x_4]$ \\ \hline \hline
\cite{MR2529094}& \cite{MR2529094} & \cite{grimley2016hochschild}\\ \hline
None & $f(f+1) \neq 0$ & $q_{ij} \in \mathbb{K^{\times}} $\\ \hline
$xw - wx$ & $yw + wy$ & $x_2x_1 - q_{12}x_1x_2$\\ 
$zy + yz$ & $zy - yz$ & $x_3x_1 - q_{13}x_1x_3$\\ 
$yw + h(-wy)$ & $yw + h(-wy - xz)$ & $x_4x_1 - q_{14}x_1x_4$\\ 
$yx + h(-fwy + xy - wz)$ & $yx + h(-xy - wz)$ & $x_3x_2 - q_{23}x_2x_3$\\ 
$zw + h(-wz)$ & $zw + h(-fxy + wz)$ & $x_4x_2 - q_{24}x_2x_4$\\ 
$zx + h(-wy - fwz + xz)$ & $zx + h(-fwy + xz)$ & $x_4x_3 - q_{34}x_3x_4$ \\ 
\hline \hline
 \textbf{Cassidy-Vancliff 1} & \textbf{Cassidy-Vancliff 2} & \textbf{Cassidy-Vancliff 3} \\ \hline \hline
 \cite[Example 1]{10.1112/jlms/jdp057} & \cite[Example 2]{10.1112/jlms/jdp057} & \cite[Example 3]{10.1112/jlms/jdp057} \\ \hline
\multicolumn{1}{|p{4.5cm}|}{\centering\shortstack{$\alpha^2 = -1 = \beta^2 $,\\ $\alpha, \beta, \gamma \in \mathbb{K^{\times}} $}} &
\multicolumn{1}{|p{4.5cm}|}{\centering\shortstack{$\alpha_2(\alpha_2-1)=0$,\\ $(\alpha_1^2 + \alpha_2^2 \beta_1)(\beta_1^2 + \beta_2^2 \alpha_1) \neq 0$}} &
\multicolumn{1}{|p{4.5cm}|}{\centering $u_{34}^2 = 1$, $u_{34} = u_{24} = u_{14}^2 = u_{13}^2$} \\ \hline

$x_4 x_1 - \alpha x_1 x_4$ & $x_3 x_1 + x_1 x_3 - \beta_2 x_2^2$ & $x_1 x_3 + u_{13} x_3 x_1$ \\ 
$x_3 x_2 - \beta x_2 x_3$ & $x_4 x_1 + x_1 x_4 - \alpha_2 x_3^2$ & $x_1 x_4 + u_{14} x_4 x_1$ \\ 
$x_3^2 - x_1^2$ & $x_2 x_3 - x_3 x_2$ & $x_3 x_4 + u_{34} x_4 x_3$ \\ 
$x_4^2 - x_2^2$ & $x_4^2 - x_2^2$ & $x_4^2 - x_2^2$ \\ 
$x_3 x_1 - x_1 x_3 + x_2^2$ & $x_4 x_2 + x_2 x_4 - x_3^2$ & $x_2 x_3 + x_3 x_2 + x_4^2$ \\ 
$x_4 x_2 - x_2 x_4 + \gamma^2 x_1^2$ & $\alpha_1 x_3^2 + \beta_1 x_2^2 - x_1^2$ & $x_2 x_4 + u_{24} x_4 x_2 + x_1^2$ \\ \hline \hline
\multicolumn{2}{|c|}{\textbf{Central Extensions}} & \textbf{Central Extensions Twist} \\
\hline \hline
\multicolumn{2}{|c|}{\cite[Equation 1.1]{MR1429334}} & 
Remark~\ref{CentralExtensionTwistands23Twist}\\ \hline
\multicolumn{2}{|c|}{$l_{ij},\ a,b, \alpha_1,\alpha_2,\alpha_3 \in \mathbb{K}$} & $q,b,c_0,c_1,c_2 \in \mathbb{K}$ \\
\hline
\multicolumn{2}{|c|}{$x_1 x_4 - x_4 x_1$} & $x_2 x_1 + q x_1 x_2 + b x_4^2 + c_0 x_3^2$ \\
\multicolumn{2}{|c|}{$x_2 x_4 - x_4 x_2$} & $x_4 x_2 + q x_2 x_4 + b x_1^2 + c_1 x_3^2$ \\
\multicolumn{2}{|c|}{$x_3 x_4 - x_4 x_3$} & $x_1 x_4 + q x_4 x_1 + b x_2^2 + c_2 x_3^2 $ \\
\multicolumn{2}{|c|}{\small $ x_1^2 + a x_2 x_3 + b x_3 x_2 + l_{11} x_1 x_4 + l_{12} x_2 x_4 + l_{13} x_3 x_4 + \alpha_1 x_4^2$} & $x_4 x_3 + x_3 x_4$ \\
\multicolumn{2}{|c|}{\small $ x_2^2 + a x_3 x_1 + b x_1 x_3 + l_{12} x_1 x_4 + l_{22} x_2 x_4 + l_{23} x_3 x_4 + \alpha_2 x_4^2$} & $x_1 x_3 + x_3 x_1 $ \\
\multicolumn{2}{|c|}{\small $ x_3^2 + a x_1 x_2 + b x_2 x_1 + l_{13} x_1 x_4 + l_{23} x_2 x_4 + l_{33} x_3 x_4 + \alpha_3 x_4^2$} & $x_2 x_3 + x_3 x_2 $ \\
\hline
\hline
\multicolumn{2}{|c|}{$\textbf{S(2,3)}^\sigma$} & \textbf{Sklyanin} \\
\hline \hline
\multicolumn{2}{|c|}{\cite[Table 8.2]{Pym2013PoissonSA}, and Remark~\ref{CentralExtensionTwistands23Twist}} & \cite[Equation 0.2.2]{Smith1992} \\ \hline

\multicolumn{2}{|c|}{\small $c_1,c_2,c_3,d_1,d_2,d_3,\alpha, \beta_1, \beta_2, \beta_3 \in \mathbb{K}$}  & 
\shortstack{$\alpha, \beta, \gamma \in \mathbb{K},\ \alpha+ \beta + \gamma +\alpha\beta\gamma = 0,$\\
$\{\alpha, \beta, \gamma\} \cap \{0, \pm1\} = \varnothing$} \\
\hline
\multicolumn{2}{|c|}{\footnotesize $\alpha(x_0 x_1 - x_1 x_0) + \beta_1 x_1^2 + \beta_2 x_2 x_1 + \beta_3 x_3 x_1 
- \alpha x_1^2 - \alpha x_1 \big( (-c_3 - 2)x_2 + c_1 x_3 \big) - d_1 \alpha x_2 x_3 $} & $f_1 := [x_0, x_1] - \alpha [x_2, x_3]_+$ \\
\multicolumn{2}{|c|}{\footnotesize $\alpha(x_0 x_2 - x_2 x_0) + \beta_1 x_1 x_2 + \beta_2 x_2^2 + \beta_3 x_3 x_2 
- \alpha x_2^2 - \alpha x_2 \big( (-c_1 - 2)x_3 + c_2 x_1 \big) - d_2 \alpha x_3 x_1$} & $f_2 := [x_0, x_1]_+ - [x_2, x_3]$ \\
\multicolumn{2}{|c|}{\footnotesize $\alpha(x_0 x_3 - x_3 x_0) + \beta_1 x_1 x_3 + \beta_2 x_2 x_3 + \beta_3 x_3^2 
- \alpha x_3^2 - \alpha x_3 \big( (-c_2 - 2)x_1 + c_3 x_2 \big) - d_3 \alpha x_1 x_2$} & $f_3 := [x_0, x_2] - \beta [x_3, x_1]_+$ \\
\multicolumn{2}{|c|}{$x_2 x_3 - x_3 x_2$} & $f_4 := [x_0, x_2]_+ - [x_3, x_1]$ \\
\multicolumn{2}{|c|}{$x_3 x_1 - x_1 x_3$} & $f_5 := [x_0, x_3] - \gamma [x_1, x_2]_+$ \\
\multicolumn{2}{|c|}{\small $x_1 x_2 - x_2 x_1$} & $f_6 := [x_0, x_3]_+ - [x_1, x_2]$ \\
\hline
\end{tabular}
}}
\caption{Defining relations of Artin-Schelter Regular Algebras of Dimension Four from \cite{MR2529094, grimley2016hochschild, 10.1112/jlms/jdp057, MR1429334, Pym2013PoissonSA, Smith1992}}
\end{table}
\begin{table}[H]
\centering
\renewcommand{\arraystretch}{0.9}
\makebox[\textwidth][c]{%
\label{tab:mytable4}
\begin{tabular}{|l|l|l|}
\hline
\textbf{Sklyanin Twist} & \textbf{Vancliff } & \textbf{Vancliff Twist} \\
\hline \hline
\cite[Lemma 4.1.3]{Davies_2016,ExoticElliptic} &  \cite[Lemma 1.13(a)]{VANCLIFF199463} & \cite[Lemma 1.13(b)]{VANCLIFF199463}\\ \hline
$\alpha, \beta, \gamma \in \mathbb{K},$ 
& $\alpha, \beta, \lambda \in \mathbb{K}^\times, \quad \lambda \neq \alpha\beta$ 
& $\alpha, \beta, \lambda \in \mathbb{K}^\times, \quad \lambda \neq \alpha\beta$ \\
$\alpha + \beta + \gamma + \alpha\beta\gamma = 0,$&&\\
$ \{ \alpha, \beta, \gamma \} \cap \{ 0, \pm 1 \} = \emptyset$&&\\
\hline
$f_1' :=[x_0, x_1]  -  \alpha [x_2, x_3]$& $x_2 x_1 - \alpha x_1 x_2$ & $x_2 x_1 - \alpha x_1 x_2$ \\
$f_2' :=[x_0, x_1]_+ - [x_2, x_3]_+$ & $x_3 x_1 - \lambda x_1 x_3$ & $x_3 x_1 - \lambda x_1 x_3$ \\
$f_3' := [x_0, x_2]  -   \beta [x_3, x_1] $& $x_4 x_1 - \alpha\lambda x_1 x_4$ & $x_4 x_1 - \alpha\lambda x_1 x_4$ \\
$f_4' := [x_0, x_2]_+ - [x_3, x_1]_+$ & $x_4 x_3 - \alpha x_3 x_4$ & $x_4 x_3 + \alpha x_3 x_4$ \\
$f_5' := [x_0, x_3]  +  \gamma [x_1, x_2]$ & $x_4 x_2 - \lambda x_2 x_4$ & $x_4 x_2 + \lambda x_2 x_4$ \\
$f_6' := [x_0, x_3]_+ + [x_1, x_2]_+$ &$ x_3 x_2 - \beta x_2 x_3 - (\alpha\beta - \lambda)x_1 x_4$ &$ x_3 x_2 + \beta x_2 x_3 - (\alpha\beta - \lambda)x_1 x_4$ \\
\hline \hline
$\textbf{L(1,1,2)}^{\sigma}$& \multicolumn{2}{l|}{$\textbf{R(3,a)}$} \\
\hline \hline 
\cite[Table 8.2]{Pym2013PoissonSA}, and Remark~\ref{CentralExtensionTwistands23Twist} & \multicolumn{2}{l|}{\cite[Example 3.20]{LECOUTRE_2017}} \\ \hline
$ p_0, p_1,\alpha, \beta, \lambda \in \mathbb{K^{\times}}; \alpha^2 = \beta \gamma$ & \multicolumn{2}{l|}{$a \in \mathbb{K} $} \\
\hline
$x_1 x_0 -  x_0 x_1$ & \multicolumn{2}{l|} { $[x_0, x_1] = -4a x_0^2$} \\ 
$\beta x_2 x_0 - p_0^{-1} \alpha x_0 x_2$ &\multicolumn{2}{l|}{\small $[x_1, x_2] = -4(a+1)x_1^2 + 8(a+1)(a+2)x_0x_1 + 4(a+2)x_0x_2$ }\\ 
$\beta x_2 x_1 - p_1 \alpha x_1 x_2 $ & \multicolumn{2}{l|}{$[x_0, x_2] = -4a x_0x_1 - 8a^2 x_0^2 + 8a x_0^2$ }\\ 
\shortstack{\small $\alpha^2\beta^{-1} x_3 x_2 - p_0^{-1} p_1 \beta x_2 x_3 \ -$\\
\small $\alpha \big( (p_1 - p_0)(x_0 x_0 + \lambda x_0 x_1 + x_1 x_1) \ +$ \\\small$  (1-p_0^2) x_0 x_0 + (p_1^2 -1) x_1 x_1 \big)$} &\multicolumn{2}{l|}{\shortstack{\small $[x_1, x_3] = -4(a+1)x_1x_2 - 8a(a+1)x_1^2 +4(a+3)x_0x_3\ +$\\$ \frac{64}{3}a(a+1)(a+2)x_0x_1  + 16(a+1)(a+2)x_0x_2 $}}\\
$\alpha^2\beta^{-1} x_3 x_0 - p_0 \alpha x_0 x_3$&\multicolumn{2}{l|}{ \small $ [x_0, x_3] = -4a x_0x_2 + 8(a - a^2)x_0x_1  + \frac{64}{6}(-a^3 + 3a^2 - 2a)x_0^2$  }\\ 
$\alpha^2\beta^{-1} x_3 x_1 - p_1^{-1} \alpha x_1 x_3 $&\multicolumn{2}{l|}{ \shortstack{ \small $ [x_2, x_3] = -4(a+2)x_2^2 + 8(a+2)(a+3)x_1x_2 +4(a+3)x_1x_3 \ -$\\\small $ \frac{64}{6}(a+2)(a+3)(a+4)x_0x_2  - 8(a+3)(a+4)x_0x_3$}} \\ 
\hline \hline
$\textbf{S}_{\infty}$ & $\textbf{S}_{\infty} \textbf{ twist}$ & $\textbf{S}_{d,i} \textbf{ twist}$ \\
\hline \hline
\cite[Equation 6.1.3]{Davies_2016,ExoticElliptic} & \cite[Equation 6.1.7]{Davies_2016,ExoticElliptic} & \cite[Equation 6.1.8]{Davies_2016,ExoticElliptic}\\ \hline
$\alpha, \beta, \gamma \in \mathbb{K},$&$\alpha, \beta, \gamma \in \mathbb{K},$&$\alpha, \beta, \gamma\in \mathbb{K}, d=(d_1, d_2) \in \mathbb{P}^1_\mathbb{K}, $\\
$\alpha + \beta + \gamma + \alpha\beta\gamma = 0,$&$\alpha + \beta + \gamma + \alpha\beta\gamma = 0,$&$\alpha + \beta + \gamma + \alpha \beta \gamma = 0,$\\
$\{\alpha, \beta, \gamma\} \cap \{0, \pm1\} = \emptyset$&$\{\alpha, \beta, \gamma\} \cap \{0, \pm1\} = \emptyset$ & $\{ \alpha, \beta, \gamma \} \cap \{ 0, \pm 1 \} = \emptyset$ \\
\hline
$[x_0, x_1]  -  \alpha[x_2, x_3]_+$& $[x_0, x_1]  -  \alpha[x_2, x_3]$ &  \\
$[x_0, x_1]_+  -  [x_2, x_3]$ & $[x_0, x_1]_+  -  [x_2, x_3]_+$ & $d_1 \Omega_1' + d_2 \Omega_2'$ \\
$[x_0, x_2]  -   \beta[x_3, x_1]_+$ & $[x_0, x_2]  -   \beta[x_3, x_1]$ & \\
$[x_0, x_2]_+  -  [x_3, x_1]$ & $[x_0, x_2]_+  -  [x_3, x_1]_+$ &$f_j^{'} : 1 \leq j \leq 6, \, j \neq i$ \\
$\Omega_1 = - x_0^2 + x_1^2 + x_2^2 + x_3^2$ & $\Omega_1 ' = - x_0^2 + x_1^2 + x_2^2 - x_3^2$ & for some $1 \le i \le 6$ \\
$\Omega_2 = x_1^2 + \frac{(1 + \alpha)}{(1 - \beta)}x_2^2 +  \frac{(1 - \alpha)}{(1 + \gamma)}x_3^2 $& $\Omega_2 ' = x_1^2 + \frac{(1 + \alpha)}{(1 - \beta)}x_2^2 -  \frac{(1 - \alpha)}{(1 + \gamma)}x_3^2$ & \\

\hline \hline
\multicolumn{2}{|c|}{\textbf{Clifford}} & $\textbf{S}_{d,i}$ \\
\hline \hline
\multicolumn{2}{|c|}{\cite[Section 1.1]{10.1112/jlms/jdp057}} & \cite[Equation 6.1.4]{Davies_2016,ExoticElliptic}\\ \hline
\multicolumn{2}{|c|}{\small $a_{ij\ell} \in \mathbb{K},\ i<j,\ 1 \le \ell \le 4$} & 
\small $\alpha, \beta, \gamma \in \mathbb{K},\ d = (d_1, d_2) \in \mathbb{P}^1_\mathbb{K}$ \\
\multicolumn{2}{|c|}{} & \small $\alpha + \beta + \gamma + \alpha\beta\gamma = 0$ \\
\multicolumn{2}{|c|}{} & \small $\{\alpha, \beta, \gamma\} \cap \{0, \pm 1\} = \varnothing$ \\
\hline
\multicolumn{2}{|c|}{$x_1x_2+x_2x_1 + a_{121}x_1^2 + a_{122}x_2^2 + a_{123}x_3^2 + a_{124}x_4^2$} &  \\
\multicolumn{2}{|c|}{$x_1x_3+x_3x_1 + a_{131}x_1^2 + a_{132}x_2^2 + a_{133}x_3^2 + a_{134}x_4^2$} & $d_1 \Omega_1 + d_2 \Omega_2$ \\
\multicolumn{2}{|c|}{$x_1x_4+x_4x_1 + a_{141}x_1^2 + a_{142}x_2^2 + a_{143}x_3^2 + a_{144}x_4^2$} &  \\
\multicolumn{2}{|c|}{$x_2x_3+x_3x_2 + a_{231}x_1^2 + a_{232}x_2^2 + a_{233}x_3^2 + a_{234}x_4^2$} & $f_j: 1 \le j \le 6, j \neq i$; \\
\multicolumn{2}{|c|}{$x_2x_4+x_4x_2 + a_{241}x_1^2 + a_{242}x_2^2 + a_{243}x_3^2 + a_{244}x_4^2$} & for some $1 \le i \le 6$ \\
\multicolumn{2}{|c|}{$x_3x_4+x_4x_3 + a_{341}x_1^2 + a_{342}x_2^2 + a_{343}x_3^2 + a_{344}x_4^2$} &  \\
\hline
\end{tabular}}
\caption{Defining relations of Artin-Schelter Regular Algebras of
Dimension Four from  \cite{DBLP:journals/jlms/CassidyV14, 10.1112/jlms/jdp057, Pym2013PoissonSA, Smith1992, CerveauLinsNeto1996, Davies_2016, ExoticElliptic, LECOUTRE_2017}}
\end{table}

\begin{table}[H]
\centering
\makebox[\textwidth][c]{%
\renewcommand{\arraystretch}{1.5}
\begin{tabular}{|c|c|c|}
\hline
\textbf{Kirkman R} & \textbf{Kirkman S} & \textbf{Kirkman T} \\ \hline \hline
\cite[page 10]{goetz2024artinschelter} & \cite[page 10]{goetz2024artinschelter} & \cite[page 10]{goetz2024artinschelter} \\
\hline 
 $x_1 x_2 + x_2 x_1$ & $x_1 x_2 - x_3 x_3$ & $x_1 x_2 - x_3 x_3$ \\
 $x_1 x_3 + x_4 x_2$ & $x_1 x_3 - x_2 x_4$ & $x_1 x_3 - x_2 x_4$ \\
 $x_1 x_4 - x_3 x_2$ & $x_1 x_4 - x_4 x_2$ & $x_1 x_4 + x_4 x_2$ \\
 $x_2 x_3 - x_4 x_1$ & $x_2 x_3 - x_3 x_1$ & $x_2 x_3 - x_3 x_1$ \\
 $x_2 x_4 + x_3 x_1$ & $x_3 x_2 - x_4 x_1$ & $x_3 x_2 - x_4 x_1$ \\
 $x_3 x_4 + x_4 x_3$ & $x_2 x_1 - x_4 x_4$ & $x_2 x_1 + x_4 x_4$ \\
 \hline \hline

$\mathcal{F}_{\substack{(0,-1,-1,2)\\(-1,0,-1,2)\\(0,0,0,0)}}$ 
&
$\mathcal{F}_{\substack{(0,-1,-1,2)\\(0,0,0,0)\\(-1,-1,2,0)}}$ 
&
\textbf{Caines Algebra}
\\
\hline \hline
$q_{12},q_{14} \in \mathbb{K}$
&
$q_{12},q_{24} \in \mathbb{K} $
&
\begin{tabular}{@{}c@{}}
\cite[Section 3.2]{MR2717252} $a, b, c, d \in \mathbb{K}$
\end{tabular}
\\
\hline
\begin{tabular}{@{}c@{}}
$x_2x_1 - q_{12}x_1x_2$\\
$x_3x_1 - q_{14}^2q_{12}^{-1}x_1x_3 - x_4^2$\\
$x_4x_1 - q_{14}x_1x_4$\\
$x_3x_2 - q_{14}^2q_{12}x_2x_3 - x_4^2$\\
$x_4x_2 - q_{14}x_2x_4$\\
$x_4x_3 - q_{14}^{-1}x_3x_4$
\end{tabular}
&
\begin{tabular}{@{}c@{}}
$x_2x_1 - q_{12}x_1x_2 - x_3^2$\\
$x_3x_1 - q_{24}^{-6}q_{12}x_1x_3$\\
$x_4x_1 - q_{24}^{-3}x_1x_4$\\
$x_3x_2 - q_{12}q_{24}^{-6}x_2x_3 - sx_4^2$\\
$x_4x_2 - q_{24}x_2x_4$\\
$x_4x_3 - q_{24}^{-1}x_3x_4$
\end{tabular}
&
\begin{tabular}{@{}c@{}}
$x_4x_3 - x_3x_4 - a x_1x_2$\\
$x_4x_2 - b x_3^2 + x_2x_4$\\
$x_4x_1 - c x_3^2 + x_1x_4$\\
$x_3x_2 - x_2x_3 + bdc^{-1}x_2x_4 - b^2dc^{-2}x_1x_4$\\
$x_3x_1 - x_1x_3 + bdc^{-1}x_1x_4 - dx_2x_4$\\
$x_2x_1 + x_1x_2$\\ 
\end{tabular}
\\

\hline
\hline 
$\mathcal{F}_{\substack{(0,-1,-1,2)\\(0,0,0,0)}}$
& $\mathcal{F}_{\substack{(-1,-1,1,1)\\(0,0,0,0)}}$ 
& $\mathcal{F}_{\substack{(0,-1,-1,2)\\(0,0,0,0)\\(2,-1,-1,0)}}$ \\ 
\hline \hline
\begin{tabular}{@{}c@{}}
$q_{13},q_{14},q_{23},q_{24} \in \mathbb{K}  $
\end{tabular}
&
\begin{tabular}{@{}c@{}}
$q_{12},q_{13},q_{14},q_{23} \in \mathbb{K} $
\end{tabular}
&
\begin{tabular}{@{}c@{}}
$q_{13},q_{23},q_{34} \in \mathbb{K} $
\end{tabular}
\\ 
\hline
\begin{tabular}{@{}c@{}}
$x_2x_1 - q_{14}^2q_{13}^{-1}x_1x_2$\\
$x_3x_1 - q_{13}x_1x_3$\\
$x_4x_1 - q_{14}x_1x_4$\\
$x_3x_2 - q_{23}x_2x_3 - x_4^2$\\
$x_4x_2 - q_{24}x_2x_4$\\
$x_4x_3 - q_{24}^{-1}x_3x_4$
\end{tabular}
&
\begin{tabular}{@{}c@{}}
$x_2x_1 - q_{12}x_1x_2 - x_3x_4$\\
$x_3x_1 - q_{13}x_1x_3$\\
$x_4x_1 - q_{14}x_1x_4$\\
$x_3x_2 - q_{23}x_2x_3$\\
$x_4x_2 - q_{13}^{-1}q_{14}^{-1}q_{23}^{-1}x_2x_4$\\
$x_4x_3 - q_{13}^{-1}q_{23}^{-1}x_3x_4$
\end{tabular}
&
\begin{tabular}{@{}c@{}}
$x_2x_1 - q_{13}^{-1}x_1x_2$\\
$x_3x_1 - q_{13}x_1x_3$\\
$x_4x_1 \pm x_1x_4$\\
$x_3x_2 - q_{23}x_2x_3 - x_4^2 - x_1^2$\\
$x_4x_2 - q_{34}^{-1}x_2x_4$\\
$x_4x_3 - q_{34}x_3x_4$
\end{tabular}
\\ 
\hline
\end{tabular}  }
\caption{Defining relations of Artin-Schelter Regular Algebras of
Dimension Four from \cite{MR2717252,  goetz2024artinschelter} }
\end{table}

\begin{table}[H]
\centering
\renewcommand{\arraystretch}{1.7}
\makebox[\textwidth][c]{%
\begin{tabular}{|c|c|c|} 
\hline
\textbf{Lie Algebra 1} & \textbf{Lie Algebra 2} & \textbf{Shelton-Tingey} \\ 
\hline 
\cite[Equation 16]{Jacobson} & \cite[Equation 17]{Jacobson} & \cite[Example 3.1]{2001_Shelton} $p^2+1 = 0; \ p \in \mathbb{K}$ \\ 
\hline
$yz - zy - xt$ & $xy - yx - xt$ & $x_3 x_1 - x_1 x_3 + x_2^2$ \\
$xy - yx$     & $xz - zx$       & $p x_4 x_1 + x_1 x_4$ \\
$xz - zx$     & $yz - zy$       & $x_4 x_2 - x_2 x_4 + x_3^2$ \\
$xt - tx$     & $xt - tx$       & $p x_3 x_2 + x_2 x_3$ \\
$yt - ty$     & $yt - ty$       & $x_1^2 - x_3^2$ \\
$zt - tz$     & $zt - tz$       & $x_2^2 - x_4^2$ \\
\hline
\hline
\textbf{Lie Algebra 3} & \textbf{Lie Algebra 4} & $\mathfrak{sl}_2$ \\
\hline
\hline
\cite[page 13]{Jacobson}  & \cite[page 13]{Jacobson}      &   \cite[Equation 22]{Jacobson}    \\ 
\hline 
\hline 
$\alpha \in \mathbb{K}$& $\beta \in \mathbb{K}$  & None \\
\hline 
$ xy-yx$& $xy-yx$ & $xz-zx- 2xt$ \\
$xz-zx- xt$& $xz-zx- xt - \beta yt $ & $yz-zy+ 2 yt$  \\
$yz-zy- \alpha yt$ & $ yz-zy-  yt$ & $xy-yx-zt$  \\
$ xt - tx$& $xt - tx$ & $xt - tx$  \\
 $yt - ty$& $yt - ty$ & $yt - ty $  \\
$zt - tz$& $zt - tz $ & $zt - tz$ \\
\hline 
\hline
 \textbf{$A_5$} & $\mathbb{K}[x_1,x_2,x_3,x_4]$ & \textbf{Central Ext. Type $S_2$}  \\ \hline \hline
 $d, a_1, a_4, a_7 \in \mathbb{K}$ & None &  $\alpha \in \mathbb{K}$  \\ \hline
 $d x_1x_4 + x_4x_1$ & $x_2x_1 - x_1x_2$ &  $x_3x_1+\alpha^{-1}x_1x_3$  \\
 $d x_2x_4 - x_4x_2$ & $x_3x_1 - x_1x_3$ & $ x_3x_2-\alpha^{-1}x_2x_3$  \\
 $a_1x_1^2 + a_4d^2x_2^2$ & $x_4x_1 - x_1x_4$  &   $ x_1^2-x_2^2$  \\
 $d x_3x_4 - i x_4x_3$ &  $x_3x_2 - x_2x_3$  &  $zx_1 -x_1z$  \\
 $a_4 x_1 x_2 + a_4 x_2 x_1 - a_7 x_3^2$ & $x_4x_2 - x_2x_4$   &  $zx_2 -x_2z$  \\
$x_2x_3 + x_3 x_2$ & $x_4x_3 - x_3x_4$   &  $zx_3 -x_3z$  \\ \hline
\end{tabular}}
\caption{Defining relations of Artin-Schelter Regular Algebras of
Dimension Four from \cite{Jacobson, MR1429334, 2001_Shelton} }
\label{tab:3by3algebras}
\end{table}

\begin{table}[H]
\centering
\renewcommand{\arraystretch}{1.1}
\makebox[\textwidth][c]{%
\begin{tabular}{|c|c|c|}
\hline
\textbf{Ore Ext. Type $A_1$} & \textbf{Ore Ext. Type $A_2$} & \textbf{Ore Ext. Type $A_3$} \\ \hline \hline
$a,b,c,d,p \in \mathbb{K}$, $p^3=1$ & $a,b,c,d,p \in \mathbb{K}$, $p^3=1$ & $a,b,c,d,p \in \mathbb{K}$, $p^3=1$ \\ \hline
\begin{tabular}{@{}c@{}}
$cx_1^{2} + ax_2x_3 + bx_3x_2$\\
$cx_2^{2} + ax_3x_1 + bx_1x_3$\\
$cx_3^{2} + ax_1x_2 + bx_2x_1$\\
$zx_1 - dx_1z$\\
$zx_2 - dp x_2z$\\
$zx_3 - dp^2x_3z$
\end{tabular}
&
\begin{tabular}{@{}c@{}}
$cx_1^{2} + ax_2x_3 + bx_3x_2$\\
$cx_2^{2} + ax_3x_1 + bx_1x_3$\\
$cx_3^{2} + ax_1x_2 + bx_2x_1$\\
$zx_1 - dx_2z$\\
$zx_2 - dp x_3z$\\
$zx_3 - dp^2x_1z$
\end{tabular}
&
\begin{tabular}{@{}c@{}}
$cx_1^{2} + ax_2x_3 + bx_3x_2$\\
$cx_2^{2} + ax_3x_1 + bx_1x_3$\\
$cx_3^{2} + ax_1x_2 + bx_2x_1$\\
$zx_1 - dx_3z$\\
$zx_2 - dp x_1z$\\
$zx_3 - dp^2x_2z$
\end{tabular}
\\ \hline \hline
\textbf{Ore Ext. Type $B_1$} & \textbf{Ore Ext. Type $E_1$} & \textbf{Ore Ext. Type $E_2$} \\ \hline \hline
$a,d,p \in \mathbb{K}$, $p^2=1$ & $d \in \mathbb{K}$ & $d \in \mathbb{K}$ \\ \hline
\begin{tabular}{@{}c@{}}
$x_1x_2 + x_2x_1 + x_2^{2} - x_3^{2}$\\
$x_1^{2} + x_2x_1 + x_1x_2 - a x_3^{2}$\\
$x_3x_1 - x_1x_3 + a x_3x_2 - a x_2x_3$\\
$zx_1 - d x_1z$\\
$zx_2 - d x_2z$\\
$zx_3 - p d x_3z$
\end{tabular}
&
\begin{tabular}{@{}c@{}}
$x_3x_1 + \zeta_9^{8}x_1x_3 + \zeta_9^{4}x_2^{2}$\\
$x_1x_2 + \zeta_9^{5}x_2x_1 + \zeta_9^{7}x_3^{2}$\\
$\zeta_9x_1^{2} + x_2x_3 + \zeta_9^{2}x_3x_2$\\
$zx_1 - d x_1z$\\
$zx_2 - d x_2z$\\
$zx_3 - d x_3z$
\end{tabular}
&
\begin{tabular}{@{}c@{}}
$x_3x_1 + \zeta_9^{8}x_1x_3 + \zeta_9^{4}x_2^{2}$\\
$x_1x_2 + \zeta_9^{5}x_2x_1 + \zeta_9^{7}x_3^{2}$\\
$\zeta_9x_1^{2} + x_2x_3 + \zeta_9^{2}x_3x_2$\\
$zx_1 - d x_1z$\\
$zx_2 - d \zeta_9^3 x_2z$\\
$zx_3 - d \zeta_9^6 x_3z$
\end{tabular}
\\ \hline \hline
\textbf{Ore Ext. Type $H$.I} & \textbf{Ore Ext. Type $H$.II} & \textbf{Ore Ext. Type $S_1'$} \\ \hline \hline
$d \in \mathbb{K}$ & $d \in \mathbb{K}$ & $\alpha,a,b,c,d \in \mathbb{K}$ \\ \hline
\begin{tabular}{@{}c@{}}
$x_1^{2} - x_2^{2}$\\
$x_1x_2 - x_2x_1 + \zeta_4 x_3^{2}$\\
$x_2x_3 - \zeta_4 x_3x_2$\\
$zx_1 - d x_1z$\\
$zx_2 + d x_2z$\\
$zx_3 - i d x_3z$
\end{tabular}
&
\begin{tabular}{@{}c@{}}
$x_1^{2} - x_2^{2}$\\
$x_1x_2 - x_2x_1 + \zeta_4 x_3^{2}$\\
$x_2x_3 - \zeta_4 x_3x_2$\\
$zx_1 - d x_1z$\\
$zx_2 - d x_2z$\\
$zx_3 + d x_3z$
\end{tabular}
&
\begin{tabular}{@{}c@{}}
$x_2x_3 + a\alpha^{-1}x_3x_2$\\
$\alpha x_3x_1 + a x_1x_3$\\
$x_3^2 + x_1x_2 + a x_2x_1$\\
$tx_1 - b^2 d x_1t$\\
$tx_2 - c^2 d x_2t$\\
$tx_3 - b c d x_3t$
\end{tabular}
\\ \hline
\textbf{Ore Ext. Type $S_2$} & \textbf{Central Ext. Type $S_1^{'}$} & \textbf{Central Ext. Type $B$} \\ \hline \hline
$\alpha,a,d,p \in \mathbb{K}$, $p^2=1$ & $a, \alpha \in \mathbb{K}$ & $a,l_{11}, l_{12}, l_{22} \in \mathbb{K}$ \\ \hline
\begin{tabular}{@{}c@{}}
$x_3x_1+\alpha^{-1}x_1x_3$\\
$x_3x_2-\alpha^{-1}x_2x_3$\\
$x_1^2-x_2^2$\\
$tx_1 - a d x_1t$\\
$tx_2 - p a d x_2t$\\
$tx_3 - d x_3t$
\end{tabular}
&
\begin{tabular}{@{}c@{}}
$x_2x_3+a\alpha^{-1}x_3x_2 $ \\
$\alpha x_3x_1+ax_1x_3 $ \\
$x_3^2+x_1x_2+ax_2x_1 $ \\
$zx_1 - x_1z$\\
$zx_2 - x_2z$\\
$zx_3 - x_3z$
\end{tabular}
&
\begin{tabular}{@{}c@{}}
$x_1x_2 + x_2x_1 + x_2^{2} - x_3^{2} + (l_{11}x_1 + l_{12}x_2)z$\\
$x_1^{2} + x_2x_1 + x_1x_2 - a x_3^{2} + (l_{12}x_1 + l_{22}x_2)z$\\
$x_3x_1 - x_1x_3 + a x_3x_2 - a x_2x_3$\\
$zx_1 - x_1z$\\
$zx_2 - x_2z$\\
$zx_3 - x_3z$
\end{tabular}
\\ \hline
\textbf{Central Ext. Type $E$} & \textbf{Central Ext. Type $H$} & \textbf{Central Ext. Type $S_1$} \\ \hline \hline
 &  & $a,\alpha,\beta \in \mathbb{K}$ \\ \hline
\begin{tabular}{@{}c@{}}
$x_3x_1 + \zeta_9^{8}x_1x_3 + \zeta_9^{4}x_2^{2}$\\
$x_1x_2 + \zeta_9^{5}x_2x_1 + \zeta_9^{7}x_3^{2}$\\
$\zeta_9x_1^{2} + x_2x_3 + \zeta_9^{2}x_3x_2$\\
$zx_1 - x_1z$\\
$zx_2 - x_2z$\\
$zx_3 - x_3z$
\end{tabular}
&
\begin{tabular}{@{}c@{}}
$x_1^{2} - x_2^{2} + x_1z$\\
$x_1x_2 - x_2x_1 + \zeta_4 x_3^{2}$\\
$x_2x_3 - \zeta_4 x_3x_2$\\
$zx_1 - x_1z$\\
$zx_2 - x_2z$\\
$zx_3 - x_3z$
\end{tabular}
&
\begin{tabular}{@{}c@{}}
$x_2x_3 + a \beta x_3x_2$\\
$\alpha x_3x_1 + a x_1x_3$\\
$x_1x_2 + a x_2x_1$\\
$zx_1 - x_1z$\\
$zx_2 - x_2z$\\
$zx_3 - x_3z$
\end{tabular}
\\ \hline
\end{tabular}}
\caption{Defining relations of Artin-Schelter Regular Algebras
of Dimension Four from \cite{MR1429334} }
\end{table}

\pagestyle{plain}  
\printbibliography[title={References}]


\end{document}